\newcommand{\EE}{{\mathcal{E}}}
\newcommand{\II}{{\mathcal{I}}}
\newcommand{\LL}{{\mathcal{L}}}
\newcommand{\g}{{\mathfrak{g}}}
\newcommand{\su}{{\mathfrak{su}}}
\newcommand{\uu}{\mathfrak{u}}
\newcommand{\SU}{{\mathrm{SU}}}
\newcommand{\U}{{\mathrm{U}}}
\newcommand{\ii}{{\mathrm{i}}}
\newcommand{\R}{{\mathbb{R}}}
\newcommand{\Z}{{\mathbb{Z}}}
\newcommand{\C}{{\mathbb{C}}}
\newcommand{\1}{{\mathbb{1}}}
\newcommand{\beq}{\begin{equation}}
\newcommand{\eeq}{\end{equation}}
\newcommand{\bea}{\begin{eqnarray}}
\newcommand{\eea}{\end{eqnarray}}
\newcommand{\bal}{\begin{align}}
\newcommand{\eal}{\end{align}}
\newcommand{\bml}{\begin{multline}}
\newcommand{\eml}{\end{multline}}
\newcommand{\bdy}{\partial}
\newcommand{\lto}{\longrightarrow}
\def \d{\mathrm{d}}
\newcommand{\ol}{\overline}
\newcommand{\tr}{{\operatorname{tr}}}
\newcommand{\rk}{{\operatorname{rk}}}
\newcommand{\ip}[1]{\langle#1\rangle}
\newcommand\xleftrightarrow[2][]{%
 \ext@arrow 9999{\longleftrightarrowfill@}{#1}{#2}}
\newcommand\longleftrightarrowfill@{%
 \arrowfill@\leftarrow\relbar\rightarrow}
\numberwithin{equation}{section}
\newtheorem{Theorem}{Theorem}[section]
\newtheorem*{Theorem*}{Theorem}
\newtheorem{Lemma}[Theorem]{Lemma}
\newtheorem{Proposition}[Theorem]{Proposition}
 { \theoremstyle{definition}

\newtheorem{Remark}[Theorem]{Remark} }
\begin{document}
\allowdisplaybreaks

\renewcommand{\thefootnote}{}

\newcommand{\arXivNumber}{2303.02623}

\renewcommand{\PaperNumber}{071}

\FirstPageHeading

\ShortArticleName{Geometry of Gauged Skyrmions}

\ArticleName{Geometry of Gauged Skyrmions\footnote{This paper is a~contribution to the Special Issue on Topological Solitons as Particles. The~full collection is available at \href{https://www.emis.de/journals/SIGMA/topological-solitons.html}{https://www.emis.de/journals/SIGMA/topological-solitons.html}}}

\Author{Josh CORK~$^{\rm a}$ and Derek HARLAND~$^{\rm b}$}

\AuthorNameForHeading{J.~Cork and D.~Harland}

\Address{$^{\rm a)}$~School of Computing and Mathematical Sciences,
University of Leicester,\\
\hphantom{$^{\rm a)}$}~University Road, Leicester, UK}
\EmailD{\href{mailto:josh.cork@leicester.ac.uk}{josh.cork@leicester.ac.uk}}
\URLaddressD{\url{https://le.ac.uk/people/josh-cork}}

\Address{$^{\rm b)}$~School of Mathematics, University of Leeds, Woodhouse Lane, Leeds, UK}
\EmailD{\href{mailto:d.g.harland@leeds.ac.uk}{d.g.harland@leeds.ac.uk}}

\ArticleDates{Received March 12, 2023, in final form September 14, 2023; Published online October 01, 2023}

\Abstract{A work of Manton showed how skymions may be viewed as maps between riemannian manifolds minimising an energy functional, with topologically non-trivial global minimisers given precisely by isometries. We consider a generalisation of this energy functional to gauged skyrmions, valid for a broad class of space and target 3-manifolds where the target is equipped with an isometric $G$-action. We show that the energy is bounded below by an equivariant version of the degree of a map, describe the associated BPS equations, and discuss and classify solutions in the cases where $G=\U(1)$ and $G=\SU(2)$.}

\Keywords{skyrmions; topological solitons; BPS equations}

\Classification{53C07; 70S15; 53C43}

\renewcommand{\thefootnote}{\arabic{footnote}}
\setcounter{footnote}{0}

\section{Introduction}
The Skyrme model of nuclear physics was introduced in \cite{skyrme1962nucl} as a nonlinear sigma model, in which the pion field is encoded by a map $\phi\colon \R^3\to S^3$ constrained to be constant at infinity. Within this framework, baryons are identified as topological solitons in the theory called skyrmions, and baryon number is represented by a topological invariant, namely the degree of the map $\phi$. Some time later this identification was given a firm foundation when the model was shown to be an~effective model of QCD in limit of a large number of colours \cite{witten1979baryons}. Despite its relative simplicity, over the years the model has been shown to successfully reproduce many realistic properties of nuclei \cite{manton2022skyrmions:theory}. An important feature of the Skyrme model is its topological energy bound \cite{faddeev1976some}, which states that the energy of a map $\phi$ is bounded from below by a multiple of its degree.\looseness=-1

Although originally described for maps $\phi\colon \R^3\to S^3$, in \cite{Manton1987geometry} Manton showed how to formulate the Skyrme model for maps between arbitrary riemannian $3$-manifolds. This geometrised Skyrme model still admits a topological energy bound. The maps $\phi\colon M\to N$ that attain the bound and minimise their energy are either constant maps or isometries \cite{Manton1987geometry}. So the geometrised Skyrme model succinctly explains why the energy of a non-constant map $\phi\colon \R^3\to S^3$ is always strictly greater than its topological energy bound.

The Skyrme field $\phi\colon \R^3\to S^3$ can be coupled to a gauge field \cite{ArthurTchrakian1996GaugedSky,
BrihayeHartmannTchhrakian2001MonopolesGaugeSky, CallanWitten1984monopole,cork2018skyrmions,
CorkHarlandWinyard2021gaugedskyrmelowbinding,
criadoKhozeSpannowsky2021emergence,dHokerFarhi1984decoupling,LivramentoRaduShnir2023solitons,navarroLRaduTchrakian2019topological,
PietteTchrakian2000static,RaduTchrakian2006spinning}. Much of the work on the gauged Skyrme model seeks to model interactions between nucleons and electromagnetic fields, in which case the gauge group $\U(1)$ is most natural, but other choices of gauge group have also been studied.

An important gap in the current understanding of gauged skyrmions is a general geometric framework akin to Manton's treatment of ordinary skyrmions. In particular, although topological bounds are known for gauged Skyrme energies, to the best of our knowledge, there is no systematic understanding of existence or non-existence of solutions of the corresponding BPS equations. The aim of this article is to address both of these gaps in the literature. As we shall see, the gauged geometrised Skyrme model is much richer than its ungauged counterpart, and there are many more solutions besides isometries and constant maps. Despite this complexity, we are able to classify solutions for important choices of gauge group: see Theorems \ref{thm:classification-S1} and \ref{thm:classification-S2}.

A brief outline of the remainder of this paper is as follows. In Section \ref{sec:geometry-gauged-sky}, we discuss equivariant differential forms and equivariant topological degree, which act as the necessary geometric ingredients for gauging the Skyrme energy and its topological energy bound. In Section \ref{sec:BPS-eqns}, we use the ideas reviewed in Section \ref{sec:geometry-gauged-sky} to write down an energy functional for gauged skyrmions, a~topological energy bound, and corresponding BPS equations. Sections \ref{sec:princ-orbit-S1}, \ref{sec:princ-orbit-S2}, and \ref{sec:princ-orbit-S3} are devoted to studying the gauged Skyrme model, and analysis of solutions of the BPS equations, in the cases of gauge structure group $G=\U(1)$ and $G=\SU(2)$. We have divided analysis by the structure of the principal orbit space of the $G$ action on $N$. There are three cases of principal orbit which we consider: $\U(1)\cong S^1$, $\SU(2)/\U(1) \cong S^2$, and $\SU(2)\cong S^3$, however it is only in the first two cases that we analyse solutions of the BPS equations, whereas in the third case we argue that our BPS formalism is not applicable. Although not the motivation of the present article, it should be remarked that, from a physical standpoint, these three cases correspond to the gauging of electromagnetism, isospin symmetry, and weak symmetry respectively.
\section{Equivariant differential forms}\label{sec:geometry-gauged-sky}
In this, and the next section, we propose a generalisation of Skyrme fields and the Skyrme energy to that of a gauge theory. The broad geometric setting is as follows. We consider two riemannian 3-manifolds $(M,g_M)$ and $(N,g_N)$. Now let $G$ be a Lie group which acts isometrically on $(N,g_N)$, and consider a principal $G$ bundle $P\to M$ over $M$. A gauged Skyrme field then consists of a pair $(\phi,A)$, where $\phi$ is a smooth section of the associated bundle $P\times_GN$, and $A$ is a connection on $P$. As $\phi$ is not simply a function from $M$ to $N$, we cannot define its degree in the usual way using (de Rham) cohomology. Instead, we will make use of the equivariant cohomology of $N$.

The importance of equivariant cohomology in gauge theory was first highlighted in the classic paper of Atiyah--Bott \cite{atiyahbott1983YMriem}. Its appearance in the topological solitons literature is somewhat uncommon; notable exceptions include papers on Yang--Mills--Higgs vortices \cite{CieliebakGaioMundetIRieraSalamon2002symplectic,mundet1999yang,SpeightRomao2020}, and on gauged sigma models describing magnetic skyrmions \cite{walton2020geometry}. All of the aforementioned examples focus on two-dimensional field theories, in contrast to the three-dimensional focus of the present work. Therefore, it is useful to review the topic. The purpose of this section is to describe Cartan's model of equivariant cohomology, and how it leads to a topological degree. This material is not new, but is included to establish conventions and keep the discussion self-contained. For a more general treatment, see, e.g., \cite{atiyahbott1984moment,berlineGetzlerVergne2003}.

\subsection{Equivariant cohomology}
Cartan's model of equivariant cohomology of $N$ is based on {equivariant differential forms} over~$N$. These are $G$-invariant elements of $S^\ast \mathfrak{g}^\ast\otimes \Omega^\ast N$. Here $S^\ast\mathfrak{g}^\ast$ is the algebra of symmetric tensors in~$\mathfrak{g}^\ast$, on which $G$ acts coadjointly. Equivalently, an equivariant differential form is a~$G$-equi\-vari\-ant polynomial function from $\mathfrak{g}$ to $\Omega^\ast N$. If $Q\in S^p\g^\ast$ and $\alpha\in\Omega^qN$ we define $\deg(Q\otimes \alpha)=2p+q$. This gives the algebra of equivariant differential forms a grading.

Similarly, given any vector bundle $E\to N$ equipped with an action of $G$, an $E$-valued equivariant differential form is a $G$-invariant element of $S^\ast\g^\ast\otimes\Omega^\ast(N,E)$. An important example of such a form is the linear homomorphism $\nu\colon \g\to \Gamma(TN)$ which generates the action of~$G$ on~$N$, i.e., $\nu(X)$ is the Killing field associated to the action of $G$. The map $\nu$ is an equivariant $2$-form valued in $TN$; it is equivariant because
\begin{align*}
 \LL_{\nu(X)}\nu(Y)=[\nu(X),\nu(Y)]=\nu([X,Y]),
\end{align*}
and is a $2$-form as it lies in $S^1\g^\ast\otimes\Gamma(TN)$.

The differential of an equivariant differential form $\beta\colon \g\to\Omega^\ast N$ is defined to be
\begin{align}\label{equivariant-exterior-derivative}
\d_\g\beta(X) = \d(\beta(X))-\iota_{\nu(X)}\beta(X)\qquad\text{for all}\ X\in \g,
\end{align}
in which $\d\colon \Omega^\ast N\to\Omega^\ast N$ is the usual differential, and $\iota_{\nu(X)}$ is contraction with respect to the Killing field $\nu(X)$. One may quickly verify that $\d_\g$ increases degree by one and squares to 0. The cohomology of $\d_\g$ is isomorphic to the topological equivariant cohomology $H^\ast_G(N,\R)$ of $N$, also known as the Borel cohomology, which is defined by the ordinary cohomology $H^\ast(EG\times_GN,\R)$, where $EG\to BG$ is the universal bundle over the classifying space $BG$.

\subsection{Equivariant pullback}
Now we describe how to pull back equivariant forms on $N$ using the section $\phi$ of $P\times_GN$ and connection $A$ on $P$. We choose a local trivialisation of our principal bundle $P$ and local coordinates~$x^\lambda$ on $M$ and $y^\mu$ on $N$. Then a section of $P\times_G N$ is represented by a function~$\phi^\mu(x)$, denoted by indices $\mu=1,\dots,\dim N$. We choose a basis $I_a$ for $\g$. Then a connection is represented by real 1-forms $A^a(x)$, with $a=1,\dots,\dim \g$. The curvature of this connection is represented by the 2-forms
\begin{equation*}
F^a=\d A^a+\tfrac{1}{2}f^{a}_{bc}A^b\wedge A^c,
\end{equation*}
where $f^a_{bc}$ are the structure constants of $\g$ defined by $[I_b,I_c]=f^a_{bc}I_a$. The curvature obeys the Bianchi identity
\begin{equation}\label{bianchi}
\d F^a + f^a_{bc}A^b\wedge F^c = 0.
\end{equation}
The left action of $G$ on $N$ induces the Lie algebra homomorphism
\begin{equation}\label{nu-definition}
\nu\colon \ \g\lto\Gamma(TN),\qquad X \mapsto \frac{\d}{\d t}\Big|_{t=0}\exp(-tX)\cdot y.
\end{equation}
By small abuse of notation, the vector fields associated with $I_a\in\g$ will be denoted $\nu(I_a)=I_a=I_a^\mu(y)\frac{\partial}{\partial y^\mu}\in\Gamma(TN)$. Then
\begin{equation}\label{Lie bracket}
I_a^\mu\partial_\mu I_b^\lambda - I_b^\mu \partial_\mu I_a^\lambda = f_{ab}^cI_c^\lambda,
\end{equation}
since $\nu$ is a homomorphism.

For an equivariant differential form $\beta\colon \g\to\Omega^q N$, we define coefficients $\beta_{a_1\dots a_p;\mu_1\dots\mu_q}$ such that
\begin{equation*}
\beta(X) = \frac{1}{p!q!}X^{a_1}\cdots X^{a_p}\beta_{a_1\dots a_p;\mu_1\dots\mu_q}\d y^{\mu_1}\wedge\dots\wedge\d y^{\mu_q},\qquad X=X^aI_a\in\g.
\end{equation*}
The coefficients $\beta_{a_1\dots a_p;\mu_1\dots\mu_q}$ are chosen totally symmetric in $a_i$ and totally antisymmetric in~$\mu_j$. The condition that $\beta$ is equivariant translates into
\begin{gather}
 0=I_b^\nu\partial_\nu\beta_{a_1a_2\dots a_p;\mu_1\mu_2\dots\mu_q}
+ \beta_{a_1a_2\dots a_p;\nu\mu_2\dots\mu_q}\partial_{\mu_1}I_b^\nu + \beta_{a_1a_2\dots a_p;\mu_1\nu\dots\mu_q}\partial_{\mu_2}I_b^\nu + \cdots \nonumber\\
 \hphantom{0=}{} +f^c_{a_1 b}\beta_{ca_2\dots a_p;\mu_1\mu_2\dots\mu_q}+f^c_{a_2 b}\beta_{a_1c\dots a_p;\mu_1\mu_2\dots\mu_q}+\cdots .\label{Lie derivative}
\end{gather}
We define the covariant differential of $\phi$ to be
\begin{equation}\label{cov-derivative}
\d^A\phi = \d^A\phi^\mu \frac{\partial}{\partial y^\mu} = (\d\phi^\mu(x) -A^a(x)I^\mu_a(\phi(x)))\frac{\partial}{\partial y^\mu}.
\end{equation}
This is a section of $T^\ast M\otimes\phi^\ast TN$ which can be understood more globally as $\phi^{\ast A}(I)$, where $I$ is the section of $T^\ast N\otimes TN$ associated with the identity map on $TN$. The pullback of an~equivariant differential form $\beta$ is then defined by
\begin{equation}\label{local pullback}
\phi^{\ast A}\beta = \frac{1}{p!q!}\beta_{a_1\dots a_p;\mu_1\dots\mu_q}(\phi(x)) F^{a_1}\wedge\dots\wedge F^{a_p}\wedge \d^A\phi^{\mu_1}\wedge\dots\wedge \d^A\phi^{\mu_q}.
\end{equation}
This is a differential form on $M$. Its degree is the same as that of $\beta$, namely $2p+q$, so equivariant pullback preserves grading. We will see in Proposition \ref{Prop:pullback-properties} below that it also intertwines the differentials $\d$ and $\d_\g$.

It is important to check that our definition \eqref{local pullback} of pullback is gauge-invariant. Infinitesimal local gauge transformations are given by $\g$-valued functions $\lambda=\lambda^a(x)I_a$, $a=1,\dots,\dim\g$. They induce gauge transformations on $(\phi,A)$ as $\phi\mapsto\exp(-t\lambda)\cdot \phi$, $A\mapsto\exp(-t\lambda)\cdot A := \exp(-t\lambda)(\d\exp(t\lambda)+A\exp(t\lambda))$. The corresponding infinitesimal actions are
\begin{align}
\dot{\phi}^\mu(x)&:=\frac{\d}{\d t}\Big|_{t=0}\exp(-t\lambda)\cdot \phi = \lambda^a(x)I_a^\mu(\phi(x)), \label{gt phi}\\
\dot{A}^a(x) &:=\frac{\d}{\d t}\Big|_{t=0}\exp(-t\lambda)\cdot A^a= \d\lambda^a(x) + f^a_{bc}A^b(x)\lambda^c(x)\label{gt A}.
\end{align}
We then have the following statement.

\begin{Proposition}\label{Prop:pullback-properties}
The equivariant pullback given in equation \eqref{local pullback} is gauge-invariant, and satisfies $\phi^{\ast A}\d_\g\beta = \d\phi^{\ast A}\beta$.
\end{Proposition}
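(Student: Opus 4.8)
Both claims are local, so the plan is to verify them in the fixed trivialisation and coordinates, working directly from the definition \eqref{local pullback}; in each case it is the equivariance identity \eqref{Lie derivative} that makes the computation close. Writing $\delta$ for an infinitesimal gauge variation, so that $\delta\phi^\mu=\lambda^aI_a^\mu(\phi)$ and $\delta A^a=\d\lambda^a+f^a_{bc}A^b\lambda^c$ by \eqref{gt phi}--\eqref{gt A}, I would first establish two derived variation rules. From $F^a=\d A^a+\tfrac12 f^a_{bc}A^b\wedge A^c$, the transformation of $A$, and the Jacobi identity for the structure constants, one gets the adjoint rule $\delta F^a=f^a_{bc}F^b\lambda^c$. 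Differentiating \eqref{cov-derivative} and using the homomorphism identity \eqref{Lie bracket} to cancel both the $\d\lambda$ terms and the terms quadratic in $A$ gives the covariant rule $\delta\big(\d^A\phi^\mu\big)=\lambda^b\,\partial_\nu I_b^\mu(\phi)\,\d^A\phi^\nu$. Finally $\delta\big(\beta_{a_1\dots a_p;\mu_1\dots\mu_q}(\phi)\big)=\lambda^b I_b^\nu\,\partial_\nu\beta_{a_1\dots a_p;\mu_1\dots\mu_q}(\phi)$ by the chain rule. Feeding these into the Leibniz expansion of $\delta\big(\phi^{\ast A}\beta\big)$, everything is proportional to $\lambda^b$, and the coefficient is precisely the equivariant pullback of the combination that \eqref{Lie derivative} asserts to vanish: the $\delta\beta_{\dots}(\phi)$ term supplies $I_b^\nu\partial_\nu\beta_{\dots}$, the $p$ contributions from $\delta F^{a_i}$ supply the $f^c_{a_ib}\beta_{\dots c\dots}$ pieces, and the $q$ contributions from $\delta\big(\d^A\phi^{\mu_j}\big)$ supply the $\partial_{\mu_j}I_b^\nu\beta_{\dots\nu\dots}$ pieces. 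Hence $\delta\big(\phi^{\ast A}\beta\big)=0$, which is gauge invariance.

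For the intertwining property I would expand both sides. Writing $\d_\g\beta(X)=\d(\beta(X))-\iota_{\nu(X)}\beta(X)$, the form $\d_\g\beta$ is the sum of a piece of bidegree $(p,q+1)$ whose components are the antisymmetrised $\partial_\nu\beta_{\dots}$, and a piece of bidegree $(p+1,q-1)$ whose components are $I_b^\nu\beta_{\dots\nu\dots}$ symmetrised over the $\g$-indices. Applying \eqref{local pullback} to each, $\phi^{\ast A}\d_\g\beta$ is the sum of a term $\tfrac{1}{p!q!}\partial_\nu\beta_{\dots}(\phi)\,F^{a_1}\wedge\cdots\wedge F^{a_p}\wedge\d^A\phi^\nu\wedge\d^A\phi^{\mu_1}\wedge\cdots$ and a term $-\tfrac{1}{p!(q-1)!}I_b^\nu(\phi)\beta_{\dots\nu\dots}(\phi)\,F^b\wedge F^{a_1}\wedge\cdots\wedge\d^A\phi^{\mu_2}\wedge\cdots$. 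On the other side I would differentiate \eqref{local pullback} by the Leibniz rule and substitute $\d\phi^\nu=\d^A\phi^\nu+A^aI_a^\nu(\phi)$, the Bianchi identity \eqref{bianchi} in the form $\d F^{a}=-f^{a}_{bc}A^b\wedge F^c$, and the identity $\d\big(\d^A\phi^\mu\big)=-F^aI_a^\mu(\phi)+A^a\wedge\partial_\nu I_a^\mu(\phi)\,\d^A\phi^\nu$. This last identity is worth isolating on its own: it follows from $\d A^a=F^a-\tfrac12 f^a_{bc}A^b\wedge A^c$ once the two terms quadratic in $A$ cancel by \eqref{Lie bracket}.

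With these substitutions in place, the terms of $\d\big(\phi^{\ast A}\beta\big)$ split into an $A$-free part and an $A$-linear part. The $A$-free part consists of the $\partial_\nu\beta\cdot\d^A\phi^\nu$ contribution coming from $\d\phi^\nu$ together with the $q$ contributions $-F^aI_a^{\mu_j}(\phi)$ coming from the $\d\big(\d^A\phi^{\mu_j}\big)$ terms (all equal, by antisymmetry of $\beta$ in the $\mu$'s); after reordering the wedge factors these reproduce exactly the two terms of $\phi^{\ast A}\d_\g\beta$ listed above. The $A$-linear part collects the $A^aI_a^\nu$ contribution from $\d\phi^\nu$, the $p$ Bianchi contributions, and the $q$ contributions $A^a\wedge\partial_\nu I_a^\mu\,\d^A\phi^\nu$; after factoring out $A^b$ the bracketed coefficient is once more the combination that \eqref{Lie derivative} asserts to vanish, so this part is zero, giving $\d\phi^{\ast A}\beta=\phi^{\ast A}\d_\g\beta$. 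I expect the main obstacle in both halves to be purely organisational: keeping track of the Koszul signs as the one-forms $\d^A\phi^{\mu_j}$ are permuted past one another and past the two-forms $F^{a_i}$, and checking that the three separate sources of adjoint-type terms recombine with exactly the coefficients occurring in \eqref{Lie derivative}. Once the quadratic-in-$A$ cancellations inside $\d\big(\d^A\phi^\mu\big)$ and $\delta\big(\d^A\phi^\mu\big)$ are secured, there is no further conceptual content and the remaining manipulations are mechanical.
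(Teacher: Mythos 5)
Your proposal is correct and follows essentially the same route as the paper's proof: a local coordinate computation in which the derived variation rules for $F^a$ and $\d^A\phi^\mu$, the substitution $\d\phi^\nu=\d^A\phi^\nu+A^aI_a^\nu$, the Bianchi identity \eqref{bianchi}, and the homomorphism identity \eqref{Lie bracket} reduce everything to the equivariance condition \eqref{Lie derivative}. The only difference is organisational: the paper writes out the case $p=q=1$ explicitly and leaves the general case implicit, whereas you sketch the general $(p,q)$ bookkeeping and isolate the identity $\d\bigl(\d^A\phi^\mu\bigr)=-F^aI_a^\mu+A^a\wedge\partial_\nu I_a^\mu\,\d^A\phi^\nu$ as a separate step.
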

\begin{proof}
For simplicity of presentation, we consider only the case $p=q=1$. Then
\begin{equation*}
\phi^{\ast A}\beta = \beta_{a;\mu}(\phi(x))F^a(x)\wedge \d^A\phi^\mu(x) .
\end{equation*}
The action of a gauge transformation on $\d^A\phi$ is calculated from \eqref{gt phi} and \eqref{gt A} using \eqref{Lie bracket} as follows:
\begin{align*}
\dot{\d^A\phi^\mu}&=\d \dot{\phi^\mu}-\dot{A}^aI_a^\mu-A^a\dot{\phi}^\nu\partial_\nu I_a^\mu \\
&=\d(\lambda^a I_a^\mu)-(\d\lambda^a+f^a_{bc}A^b\lambda^c)I_a^\mu - A^a\lambda^bI_b^\nu\partial_\nu I_a^\mu \\
&=\d\lambda^a I_a^\mu + \lambda^a\d\phi^\nu \partial_\nu I_a^\mu-\d\lambda^aI_a^\mu-A^b\lambda^c(f^a_{bc}I_a^\mu + I_c^\nu\partial_\nu I_b^\mu) \\
&=\lambda^a\d\phi^\nu \partial_\nu I_a^\mu-A^b\lambda^cI_b^\nu\partial_\nu I_c^\mu \\
&=\lambda^a \d^A\phi^\nu\partial_\nu I_a^\mu.
\end{align*}
Also the action of a gauge transformation on $F$ calculated from \eqref{gt A} is
\begin{equation*}
\dot{F^a}=f^a_{bc}F^b\lambda^c.
\end{equation*}
Therefore, the action on $\phi^{\ast A}\beta$ is{\samepage
\begin{align*}
\dot{\phi^{\ast A}\beta}&=\dot\phi^\nu\partial_\nu\beta_{a;\mu}F^a\wedge \d^A\phi^\mu + \beta_{a;\mu}\dot{F^a}\wedge \d^A\phi^\mu + \beta_{a;\mu}F^a\wedge \dot{\d^A\phi^\mu} \\
&=\lambda^bI_b^\nu\partial_\nu\beta_{a;\mu}F^a\wedge \d^A\phi^\mu + \beta_{a;\mu}f^a_{bc}F^b\lambda^c\wedge \d^A\phi^\mu + \beta_{a;\mu}F^a\wedge \lambda^b \d^A\phi^\nu\partial_\nu I_b^\mu \\
&=\lambda^b(I_b^\nu\partial_\nu\beta_{a;\mu} + f^c_{ab}\beta_{c;\mu} + \partial_\mu I_b^\nu\beta_{a;\nu})\,F^a\wedge \d^A\phi^\mu
=0
\end{align*}
by \eqref{Lie derivative}. So $\phi^{\ast A}\beta$ is gauge-invariant.}

Next we calculate exterior differentials. By definition \eqref{equivariant-exterior-derivative},
\begin{equation*}
\phi^{\ast A}\d_\g\beta = \partial_\nu\beta_{a;\mu}F^a\wedge \d^A\phi^\nu\wedge \d^A\phi^\mu - \beta_{a;\mu}I_b^\mu F^a\wedge F^b.
\end{equation*}
Therefore,
\begin{align*}
\d\phi^{\ast A}\beta &=\d\phi^\nu\partial_\nu\beta_{a;\mu}\wedge F^a\wedge \d^A\phi^\mu + \beta_{a;\mu}\d F^a\wedge \d^A\phi^\mu \nonumber\\
&\quad -\beta_{a;\mu}F^a\wedge\d A^b I_b^\mu + \beta_{a;\mu}F^a\wedge A^b\wedge \d\phi^\nu\partial_\nu I_b^\mu \\
&= (\d^A\phi^\nu+A^bI_b^\nu)\partial_\nu\beta_{a;\mu}\wedge F^a\wedge \d^A\phi^\mu
- \beta_{a;\mu}f^a_{bc}A^b\wedge F^c\wedge \d^A\phi^\mu \nonumber\\
&\quad -\beta_{a;\mu}F^a\wedge (F^b-\tfrac12 f^b_{cd}A^c\wedge A^d) I_b^\mu
+ \beta_{a;\mu}F^a\wedge A^b\wedge (\d^A\phi^\nu+A^cI_c^\nu) \partial_\nu I_b^\mu \\
&= \phi^{\ast A}\d_\g \beta + \beta_{a;\mu}F^a\wedge A^b\wedge A^c (\tfrac12 f_{bc}^dI_d^\mu +I_c^\nu\partial_\nu I_b^\mu) \nonumber \\
&\quad+ F^a\wedge A^b\wedge \d^A\phi^\mu (I_b^\nu\partial_\nu\beta_{a;\mu}+f_{ab}^c\beta_{c;\mu} +\beta_{a;\nu}\partial_\mu I_b^\nu )\\
&= \phi^{\ast A}\d_\g \beta,
\end{align*}
by \eqref{bianchi}, \eqref{Lie bracket} and \eqref{Lie derivative}.
\end{proof}

Although our definition of equivariant pullback was entirely local, the notion can also be described from an entirely global perspective. Very briefly, a connection $A$ on $P$ determines a~map $\Psi^A$ from the algebra of equivariant differential forms on $N$ to the algebra of differential forms on $P\times_GN$ \cite{berlineGetzlerVergne2003}. This map is known as the Chern--Weil homomorphism (in the special case where $N$ is a point and $G=\SU(n)$ acts trivially, it sends the polynomial $\tr(X^n)$ to the Chern--Weil form $\tr(F\wedge\dots\wedge F)$). In general, the equivariant pullback of an equivariant differential form $\beta$ on $N$ by a section $\phi\colon M\to P\times_N G$ is then given by $\phi^{\ast A}\beta = \phi^\ast \Psi^A(\beta)$. For more details on this, see \cite{berlineGetzlerVergne2003}.

\subsection{Equivariant topological degree}

We are now in a position to define an equivariant degree of a section $\phi$. For simplicity, we now assume that $M$ and $N$ are compact and three dimensional, and that $H^3_G(N,\R)$ is one dimensional and generated by
\begin{align*}
 X\mapsto V_N+\mu(X).
\end{align*}
Here $V_N$ is a $G$-invariant 3-form on $N$ and $\mu$ is an equivariant linear map $\g\to\Omega^1N$. This form must be $\d_\g$-closed, and hence must satisfy
\begin{align}\label{moment-map-def}
 & \d\mu(X)=\iota_{\nu(X)}V_N\qquad\text{and}\\
\label{moment-map-constraint}
 & \iota_{\nu(X)}\mu(X)=0\qquad\text{for all }X\in\g.
\end{align}
The first condition \eqref{moment-map-def} resembles the definition of the (co)-moment map found in symplectic geometry, where here the role of the symplectic form is replaced by the volume form. For this reason, we shall refer to such a map $\mu$ as a moment map associated to the action $\nu\colon \g\to\Gamma(TN)$.
The second condition \eqref{moment-map-constraint} is a significant constraint on the moment map that has no analogue in symplectic geometry; equivariance ensures $\iota_{\nu(X)}\mu(X)$ is constant, but not that this constant is zero. As we shall explore later, even for simple natural examples there are no non-trivial moment maps satisfying \eqref{moment-map-constraint}, ruling out the possibility of a suitably well-defined topological degree.\looseness=-1

Equations \eqref{moment-map-def} and \eqref{moment-map-constraint} have been considered before in the more general context of homotopy moment maps \cite{calliesfregierrogerszambon2016homotopy}, with no constraints on the dimension of $N$ or the degree of the form. Existence and uniqueness results are established in \cite{calliesfregierrogerszambon2016homotopy}, of which it is worth noting that in three dimensions, for compact and semisimple $G$, a sufficient condition for existence is when the $G$-action has a fixed point.

The $G$-invariant 3-form $V_N$ defines a non-trivial class in $H^3_G(N,\R)$ provided that $\int_N V_N\neq0$ (in addition to the constraints \eqref{moment-map-def} and \eqref{moment-map-constraint}). One could, for example, choose $V_N$ to be the volume form associated with the $G$-invariant metric $g_N$. Assuming that these conditions are satisfied, we define the {equivariant topological degree} of a section $\phi$ of $P\times_GN$ by
\begin{align}\label{equivariant-top-degree}
 \deg_G(\phi)=\frac{\int_M\phi^{\ast A}(V_N+\mu)}{\int_N V_N}.
\end{align}
As with the usual degree, we may remove the assumption of compactness of $M$ by imposing suitable boundary conditions on $P\to M$, $A$, and $\phi$.

In suitable circumstances, the topological degree $\deg_G(\phi)$ is an integer. We briefly explain why, using equivariant homology. The equivariant homology groups $H^G_\ast(N,\Z)$ of $N$ are defined to be the ordinary homology of $EG\times_G N$, in which $EG\to BG$ is the universal $G$-bundle over the classifying space $BG$. There is a natural map $N\to N\times_G EG$, and hence $H_\ast(N,\Z)\to H^G_\ast(N,\Z)$. We denote by $[N]$ the image in $H^G_3(N,\Z)$ of the fundamental class of $N$, and assume that this generates $H^G_3(N,\Z)$. The dual map in cohomology is $H^\ast_G(N,\R)\to H^\ast(N,\R)$ given by $[\beta(X)]\mapsto[\beta(0)]$ for an equivariant closed form $\beta$. By assumption, this map is an~isomorphism, so if $V_N+\mu(X)$ and $\tilde{V}_N+\tilde{\mu}(X)$ are two equivariantly closed differential forms that satisfy $\int_NV_N=\int_N\tilde{V}_N$, then their difference must be in the image of $\d_\g$. This means (by Proposition~\ref{Prop:pullback-properties}) that~\eqref{equivariant-top-degree} is independent of the choice of $V_N$ and $\mu$.

By definition, the bundle $P\to M$ can be obtained as the pullback of $EG\to BG$ by a map $j\colon M\to BG$. There are hence natural maps $P\to EG$ and $P\times_G N \to EG\times_G N$. So the section $\phi\to P\times_G N$ determines a map $M\to EG\times_G N$, and hence a map $\phi_{\ast A}\colon H_3(M,\Z)\to H_3^G(N,\Z)$ that is dual to $\phi^{\ast A}\colon H^3_G(N,\R)\to H^3(M,\R)$. The image under this map of the fundamental class $[M]\in H_3(M,\Z)$ is by assumption equal to $n$ times $[N]$ for some integer $n$. Then $\int_M\phi^{\ast A}(V_N+\mu)$ is equal to the pairing of $n[N]$ with $[V_N+\mu]$, which is in turn equal to $n\int_N V_N$.

\section{BPS equations for gauged skyrmions}\label{sec:BPS-eqns}
We shall now use the geometric ingredients reviewed in the previous section to describe a~gauged version of the Skyrme model, valid for a broad class of riemannian 3-manifolds $(M,g_M)$ and $(N,g_N)$, and derive its BPS equations. To motivate our model, we begin by reviewing Manton's geometrised Skyrme model \cite{Manton1987geometry} from a novel perspective.
\subsection{The ungauged Skyrme model}
Let $(M,g_M)$ and $(N,g_N)$ be two riemannian 3-manifolds. The Hodge star of $N$ is a map $\star_{N}\colon \Lambda^1N\to\Lambda^2N$. This corresponds to the section $\Sigma$ of $TN\otimes\Lambda^2N$ defined by
\begin{align}\label{Hodge-star-def}
 g_N(u,\Sigma(v,w))=V_N(u,v,w) \qquad\text{for all }\,u,v,w\in TN,
\end{align}
with $V_N$ the volume form on $(N,g_N)$. The Skyrme energy of a map $\phi\colon M\to N$ is defined by
\begin{align}\label{Skyrme-energy}
E[\phi]=\int_Mg_N(\d\phi\wedge\star_{M} \d\phi) + g_N(\phi^\ast\Sigma\wedge\star_{M}\phi^\ast\Sigma).
\end{align}
This is equivalent to the energy introduced by Manton \cite{Manton1987geometry}. In this context, the map $\phi$ is referred to as a Skyrme field. Note that $\d\phi$ and $\phi^\ast\Sigma$ are sections of $\Lambda^1 M\otimes \phi^\ast TN$ and $\Lambda^2 M\otimes \phi^\ast TN$ respectively, and our expression for the energy uses the metric $g_N$ on $TN$. The topological degree of $\phi$ is the quantity
\begin{align}\label{top-degree}
 \deg(\phi)=\frac{1}{{\rm Vol}(N)}\int_M\phi^\ast V_N.
\end{align}
A skyrmion is a critical point of the Skyrme energy \eqref{Skyrme-energy}, and these are classified by their topological degree. The energy \eqref{Skyrme-energy} is bounded below proportionally by the degree \eqref{top-degree}. To derive this energy bound, consider the identity
\begin{gather}
g_N ((\d\phi-\star_{M}\phi^\ast\Sigma)\wedge(\star_{M} \d\phi-\phi^\ast\Sigma) )\nonumber\\
\qquad{} =g_N(\d\phi\wedge\star_{M} \d\phi) + g_N(\phi^\ast\Sigma\wedge\star_{M}\phi^\ast\Sigma) - 2g_N(\d\phi\wedge\phi^\ast\Sigma).\label{BOG-argument-skyrme}
\end{gather}
Employing an orthonormal frame $E_a$ for $TN$, and a dual frame $e^a$ for $\Lambda^1N$, we can write the last term in \eqref{BOG-argument-skyrme} as
\begin{align}\label{ip-volume-pullback}
-2g_N(E_a,E_b)\phi^\ast e^a \wedge\phi^\ast(\star_{N} e^b)=-2\phi^\ast (e^a\wedge\star_{N} e^a)=-6\phi^\ast V_N.
\end{align}
Thus, as \eqref{BOG-argument-skyrme} is non-negative, this leads to the topological energy bound
\begin{align}
E[\phi]\geq 6{\rm Vol}(N)|\deg\phi|,\label{Top-bound-skyrmions}
\end{align}
which is saturated if and only if
\begin{align}\label{ungauged bog eq}
\star_{M} \d\phi = \phi^\ast \Sigma.
\end{align}
Solutions of this equation, if they exist, represent global minimisers of the Skyrme energy \eqref{Skyrme-energy} within the space of maps of fixed degree. Equation \eqref{ungauged bog eq} is equivalent to the statement that the pullback $\phi^\ast$ commutes with the Hodge star when acting on 1-forms, i.e., that the diagram
\begin{equation}\label{BPS-comm-diagram}
\begin{tikzcd}
\Lambda^1N\arrow{r}{\phi^\ast}\arrow[swap]{d}{\star_{N}}&\Lambda^1M\arrow{d}{\star_{M}}\\
\Lambda^2N\arrow{r}{\phi^\ast}&\Lambda^2M
\end{tikzcd}
\end{equation}
commutes. Since the maps $\star$ are isomorphisms, this holds only if the ranks of the maps $\phi^\ast\colon \Lambda^1N\to\Lambda^1M$ and $\phi^\ast\colon \Lambda^2N\to\Lambda^2M$ are equal. If $\phi^\ast\colon \Lambda^1N\to\Lambda^1N$ has rank~2 (resp.~1) then $\phi^\ast\colon \Lambda^2N\to\Lambda^2N$ has rank 1 (resp.\ 0), because if $u\in\Lambda^1N$ is such that $\phi^\ast u=0$, then $\phi^\ast(u\wedge v)=0$ for all $v\in\Lambda^1N$.
So this leaves only two cases: either the maps are both zero rank, or full rank. The former case is when the map $\phi$ is constant. The latter case is equivalent to $\phi$ being a diffeomorphism, and in this case \eqref{BPS-comm-diagram} commutes precisely when $\phi$ is an isometry.

This result is a reformulation of the result of Manton \cite{Manton1987geometry}. The equation \eqref{ungauged bog eq} is equivalent to Manton's equations $\lambda_1=\lambda_2\lambda_3$, $\lambda_2=\lambda_3\lambda_1$, $\lambda_3=\lambda_1\lambda_2$ relating the eigenvalues $\lambda_i^2$ of the strain tensor $g_M^{-1}\circ\phi^\ast g_N$. The only solutions of these equations are $\lambda_i=0$ corresponding to $\phi$ constant, or $\lambda_i=1$ corresponding to $\phi$ an isometry. An important consequence of this result is that non-constant ordinary skyrmions $\phi\colon \R^3\to S^3$ cannot attain the topological bound \eqref{Top-bound-skyrmions} as there is no isometry between $\R^3$ and $S^3$.
\subsection{Gauged Skyrme energy functional}\label{sec:energy-gauged}
Now we assume that $(N,g_N)$ admits an isometric action generated by $\nu\colon \g\to \Gamma(TN)$ and let $P\to M$ be a $G$-principal bundle. Consider the following energy functional for a section $\phi$ of~$P\times_GN$ and a connection $A$ on $P$:
\begin{align} E[\phi,A]=&\int_Mc_1\bigl|\d^A\phi\bigr|^2+c_2\bigl|\phi^{\ast A}\Sigma\bigr|^2+c_3\bigl|\phi^{\ast A}\nu\bigr|^2+c_4|\phi^{\ast A}\mu^\sharp|^2\nonumber\\
&+\bigl\langle c_5\phi^{\ast A}\nu+c_6\phi^{\ast A}\mu^\sharp,\phi^{\ast A}\Sigma\bigr\rangle.\label{gauged-Skyrme-energy}
\end{align}
Here $c_1,\dots,c_6\in\R$ are constants, $\Sigma\in\Omega^2(N,TN)$ is the $2$-form (defined in \eqref{Hodge-star-def}) representing the Hodge star on $N$ as was the case for the ordinary Skyrme energy \eqref{Skyrme-energy}, and $\mu^\sharp\colon \g\to\Gamma(TN)$ is the vector field dual to the moment map $\mu$ (defined by the conditions \eqref{moment-map-def} and \eqref{moment-map-constraint}) via the metric $g_N$, i.e.,
\begin{align*}
 g_N(\mu^\sharp(X),v)=\mu(X)(v)\qquad\text{for all }X\in\g,\ v\in TN.
\end{align*}
Also, the inner product $\ip{\cdot,\cdot}$ used in \eqref{gauged-Skyrme-energy} is defined as usual by the metrics $g_M$ and $g_N$ as
\begin{align}
 \ip{\alpha,\beta}=g_N(\alpha\wedge\star_{M}\beta),\qquad|\alpha|^2=\ip{\alpha,\alpha}.\label{pairing}
\end{align}
Although clearly a lot more complex than the ordinary Skyrme energy \eqref{Skyrme-energy}, the energy functional \eqref{gauged-Skyrme-energy} possesses some important natural features, which are similar in spirit to \eqref{Skyrme-energy}. Firstly it respects the obvious symmetries; it is gauge-invariant by Proposition \ref{Prop:pullback-properties}, and invariant under orientation-preserving isometries of $g_M$ and $g_N$ by construction. It also shares the feature of the ordinary Skyrme energy \eqref{Skyrme-energy} that all terms are either quadratic or quartic in derivatives; the quadratic term is $|\d^A\phi|^2$, and all others are quartic in derivatives since they are formed by pulling back the equivariant $2$-forms $\Sigma$, $\nu$, and $\mu^\sharp$, and then pairing them using \eqref{pairing}.

The attentive reader may wonder why we have not considered a term of the form $\ip{\phi^{\ast A}\nu,\phi^{\ast A}\mu^\sharp}$ in \eqref{gauged-Skyrme-energy}; indeed, if the motivation is to consider all quartic terms made from combinations of the equivariant $2$-forms $\Sigma$, $\nu$, and $\mu^\sharp$, this should be there. However, recall that part of the definition of $\mu$ is the equation \eqref{moment-map-constraint} required so that $V_N+\mu$ is equivariantly closed. It is straightforward to show that this condition, $\iota_{\nu(X)}\mu(X)=0$ for all $X\in\g$, implies $\ip{\phi^{\ast A}\nu,\phi^{\ast A}\mu^\sharp}=0$ identically. Such a term might be relevant for theories which do not possess topological degree, but since the motivation of the present article is to describe BPS gauged skyrmions, we include the condition~\eqref{moment-map-constraint}, and hence do not have such a term in our energy.

One natural term which is not apparently present in the functional \eqref{gauged-Skyrme-energy} as presented is a~Yang--Mills term, i.e., a term proportional to $|F|_\g^2$, with $F$ being the curvature of $A$. A Yang--Mills term in \eqref{gauged-Skyrme-energy} can be reproduced by suitable combination of $|\phi^{\ast A}\nu|^2$ and $|\phi^{\ast A}\mu^\sharp|^2$, both of which are quadratic in $F$. A caveat to this is that the Yang--Mills term may be accompanied by additional non-trivial couplings between $F$ and $\phi$. This we shall see explicitly via an example below.

We shall now illustrate how the functional \eqref{gauged-Skyrme-energy} generalises previously studied energy functionals for gauged skyrmions. Specifically, here we consider the case where $N=\SU(2)$ equipped with the round metric on $S^3\cong\SU(2)$, and $G=\SU(2)$ acting on itself via the adjoint action. As we shall see, in this case the energy \eqref{gauged-Skyrme-energy} may be reduced to the form of the energy introduced in \cite{CorkHarlandWinyard2021gaugedskyrmelowbinding}. See also \cite[Section~8.2]{calliesfregierrogerszambon2016homotopy}.

For all $g\in\SU(2)$ and $X\in\su(2)$, the left and right actions of $\SU(2)$ on itself induce two Lie algebra homomorphisms $\nu_{L,R}\colon \su(2)\to\Gamma(T\SU(2))$,
\begin{equation*}
\nu_L(X)=\frac{\d}{\d t}\Big|_{t=0}\exp(-tX)g,\qquad\nu_R(X)=\frac{\d}{\d t}\Big|_{t=0}g\exp(tX).
\end{equation*}
The adjoint action is described by
\begin{equation*}
\nu=\nu_L+\nu_R.
\end{equation*}
The left- and right-invariant Maurer--Cartan 1-forms $\theta_{L,R}\in\su(2)\otimes\Gamma(\Lambda^1\SU(2))$ are defined by $\theta_L=g^{-1}\d g$, $\theta_R=(\d g)g^{-1}$ and satisfy the structure equations
\begin{equation}\label{structure-eqns-SU2}
\d\theta_L+\theta_L\wedge\theta_L=0,\qquad\d\theta_R-\theta_R\wedge\theta_R=0.
\end{equation}
Their contractions with $\nu_{L,R}$ satisfy{\samepage
\begin{equation}\label{contraction-Maurer}
\iota_{\nu_R(X)}\theta_L=-\iota_{\nu_L(X)}\theta_R=X,\qquad \iota_{\nu_L(X)}\theta_L = -g^{-1}Xg,\qquad \iota_{\nu_R(X)}\theta_R = gXg^{-1}
\end{equation}
for all $X\in\su(2)$, $g\in\SU(2)$.}

The round metric on $\SU(2)$ is bi-invariant and takes the form
\begin{equation*}
g_N=-\tfrac{1}{2}\tr(\theta_L\theta_L)=-\tfrac{1}{2}\tr(\theta_R\theta_R).
\end{equation*}
The associated volume form is
\begin{equation*}
V_N=-\tfrac{1}{12}\tr(\theta_L\wedge\theta_L\wedge\theta_L)=-\tfrac{1}{12}\tr(\theta_R\wedge\theta_R\wedge\theta_R).
\end{equation*}
Then from \eqref{contraction-Maurer} and the structure equations \eqref{structure-eqns-SU2}
\begin{equation*}
\iota_{\nu(X)}V_N = \tfrac{1}{4}\tr(X(\theta_R\wedge\theta_R-\theta_L\wedge\theta_L))
=\d\tfrac{1}{4}\tr(X(\theta_R+\theta_L)).
\end{equation*}
So a moment map is given by $\mu(X)=\tfrac{1}{4}\tr(X(\theta_L+\theta_R))$. By \eqref{contraction-Maurer}, this satisfies \eqref{moment-map-constraint}, and so allows for a suitably well-defined topological degree. In fact, as we shall explore later, it is not the most general choice of moment map for the adjoint action, however this choice suffices for the purpose of this illustration.

The dual $\mu^\sharp$ is given by
\begin{equation}\label{moment-map-adjoint-round-sphere-choice}
\mu^\sharp(X)=\tfrac{1}{2}(\nu_L(X)-\nu_R(X)),
\end{equation}
because
\begin{align*}
 g_N(\cdot,\nu_L(X)-\nu_R(X))& =-\tfrac{1}{2}\tr(\theta_L(\iota_{\nu_L(X)}-\iota_{\nu_R(X)})\theta_L)\\
 & =\tfrac{1}{2}\tr(g\theta_Lg^{-1}X+X\theta_L)=\tfrac{1}{2}\tr(X(\theta_R+\theta_L)).
\end{align*}
The forms $\Sigma$, $\nu$, $\mu^\sharp$ are $T\SU(2)$-valued. In order to write down their pullbacks, it is convenient to identify $T\SU(2)$ with the trivial bundle $\su(2)\times \SU(2)$. Any $T\SU(2)$-valued form can be turned into an $\su(2)$-valued form by composing it with the Maurer--Cartan form $\theta_L\colon T\SU(2)\to\su(2)$. So $\nu(X)$ and $\mu^\sharp(X)$ become the $\su(2)$-valued functions
\begin{equation*}
\iota_{\nu(X)}\theta_L = X-g^{-1}Xg,\qquad \iota_{\mu^\sharp(X)}\theta_L = -\tfrac{1}{2}(X+g^{-1}Xg)
\end{equation*}
of $g\in\SU(2)$. By the same token, $\Sigma$ becomes
\begin{equation*}
\iota_\Sigma \theta_L=\tfrac{1}{2}\theta_L\wedge\theta_L,
\end{equation*}
because
\begin{equation*}
-\tfrac{1}{2}\tr(\iota_{\Sigma(u,v)}\theta_L\,X) = g_N(\Sigma(u,v),\nu_R(X)) = V_N(u,v,\nu_R(X)) = -\tfrac{1}{4}\tr([\theta_L(u),\theta_L(v)]X)
\end{equation*}
for all $u$, $v\in T_g\SU(2)$ and $X\in\su(2)$.

Finally, the tautological section $I$ of $T\SU(2)\otimes\Lambda^1\SU(2)$ corresponds to the $\su(2)$-valued \mbox{1-form} $\theta_L$. It then follows that, given a map $U\colon M\to \SU(2)$ (which represents $\phi$) and an $\su(2)$ connection $A$, the equivariant pullbacks are given by the $\su(2)$-valued forms
\begin{gather*}
\d^A\phi=\phi^{\ast A}(I)=L^A:=U^{-1}(\d U+[A,U]),\\
\phi^{\ast A}(\Sigma)=\tfrac{1}{2}\bigl(L^A\wedge L^A\bigr),\\
\phi^{\ast A}(\nu)=F-U^{-1}F U,\\
\phi^{\ast A}(\mu^\sharp) = -\tfrac{1}{2}\bigl(F+U^{-1}F U\bigr).
\end{gather*}
Putting this together, the energy functional \eqref{gauged-Skyrme-energy} becomes
\begin{align}\label{CHW-energu}
E[U,A]=&\int_Mc_1\big|L^A\big|^2+\tfrac{c_2}{4}\big|L^A\wedge L^A\big|^2+\tfrac{1}{2}(4c_3+c_4)|F|^2+\tfrac{1}{2}(c_4-4c_3)\big\langle F,U^{-1}FU\big\rangle\nonumber\\
&+\tfrac{1}{4}\big\langle (2c_5-c_6)F-(2c_5+c_6)U^{-1}F U,L^A\wedge L^A\big\rangle,
\end{align}
which is equivalent to the energy introduced in \cite{CorkHarlandWinyard2021gaugedskyrmelowbinding}. If $c_4=4c_3$, then the quadratic term in $F$ is just the usual Yang--Mills term.

\subsection{Energy bound and BPS equations}

In this section, we discuss how to bound the energy \eqref{gauged-Skyrme-energy} by the equivariant topological degree~\eqref{equivariant-top-degree}. To do this it is useful to write the integrand $\phi^{\ast A}(V_N+\mu)$ of \eqref{equivariant-top-degree} using the pairing~\eqref{pairing}. By definition (equations \eqref{Hodge-star-def} and \eqref{moment-map-def}), we may show analogously to the argument in \eqref{ip-volume-pullback} that
\begin{align*}
 V_N+\mu=\tfrac{1}{3}g_N\big(I\wedge\big(\Sigma+3\mu^\sharp\big)\big)
\end{align*}
(with $I\in\Gamma(TM\otimes T^\ast M)$ being the identity map). Pulling this back, we therefore find
\begin{align}\label{charge-density-ip}
 \phi^{\ast A}(V_N+\mu)=\tfrac{1}{3}\bigl\langle \star_{M}\d^A\phi,\phi^{\ast A}\big(\Sigma+3\mu^\sharp\big)\bigr\rangle.
\end{align}
In \cite{CorkHarlandWinyard2021gaugedskyrmelowbinding}, general energy bounds for the $\SU(2)$ gauged Skyrme energy \eqref{CHW-energu} were derived. The arguments presented there did not explicitly rely on the $\SU(2)$ structure~-- they only needed that the energy and charge may be expressed as quadratic forms, just as we have done here~-- and so may be applied immediately to our energy. Using the methods outlined in \cite{CorkHarlandWinyard2021gaugedskyrmelowbinding}, one may use~\eqref{charge-density-ip} to show that (assuming suitable constraints on the parameters $c_i$ so that $E$ is positive) one has the general bound\footnote{This equates to the bound given in \cite{CorkHarlandWinyard2021gaugedskyrmelowbinding} for the case $N=\SU(2)$ under the identification
\[
c_1=x_1,\qquad c_2 =x_2,\qquad c_3=\tfrac{x_5}{4},\qquad
 c_4 =\tfrac{x_6}{2},\qquad c_5=-\tfrac{x_4}{2},\qquad c_6 =\tfrac{x_3}{2},
 \]
up to an overall constant factor in the energy expression.}
\begin{align*}
E[\phi,A]\geq6\sqrt{\frac{c_1\big(c_3\big(4c_2c_4-c_6^2\big)-c_4c_5^2\big)}{4c_3(9c_2+c_4-3c_6)-9c_5^2}}\,{\rm Vol}(N)|\deg_G(\phi)|.
\end{align*}
The disadvantage of this general bound is that it is not clear what the associated BPS equations are. In general one should expect there to be several BPS equations, in which case finding non-trivial solutions is unlikely. We therefore want to constrain the parameters $c_i$ in such a way for which the number of BPS equations is small; specifically one or two sets of equations. In order for this to be the case, it is straightforward to realise that this forces the energy density for \eqref{gauged-Skyrme-energy} to be of the form
\begin{gather}\label{BPS-bound-step1}
\EE=\kappa\big|\lambda\star_{M}\d^A\phi\mp\phi^{\ast A}\big(\Sigma+3\mu^\sharp\big)\big|^2
 +\big|\phi^{\ast A}(\alpha\Sigma+\beta\mu^\sharp+\gamma\nu)\big|^2\\\nonumber
 \hphantom{\EE=}{}\pm2\kappa\lambda\big\langle \star_{M}\d^A\phi,\phi^{\ast A}\big(\Sigma+3\mu^\sharp\big)\big\rangle ,
\end{gather}
for some constants $\alpha, \beta, \gamma, \kappa, \lambda\in\R$, $\kappa, \lambda>0$ which should be related to the coefficients $c_1,\dots,c_6$. By reorienting and rescaling $(M,g_M)$, we can fix the sign above, and choose $\lambda=1$. We may also fix $\kappa=1$ by fixing an energy scale. So then, from \eqref{charge-density-ip} and \eqref{BPS-bound-step1} we hence have the topological energy bound
\begin{align}
 E[\phi,A]\geq6\,{\rm Vol}(N) |\deg_G(\phi) |,\label{BPS-bound}
\end{align}
which is attained if and only if the BPS equations
\begin{gather}
 \star_{M}\d^A\phi-\phi^{\ast A}\big(\Sigma+3\mu^\sharp\big)=0,\label{BPS1}\\
 \phi^{\ast A}\big(\alpha\Sigma+\beta\mu^\sharp+\gamma\nu\big)=0,\label{BPS2}
\end{gather}
both hold. By comparing coefficients in \eqref{gauged-Skyrme-energy} and \eqref{BPS-bound-step1}, we see that in order for these steps to be valid, the parameters in the energy must relate to the parameters in these equations and bound via{\samepage
\begin{alignat}{4}
 & c_1=1,\qquad && c_2=1+\alpha^2,\qquad&& c_3 =\gamma^2,&\nonumber\\
 & c_4=9+\beta^2,\qquad && c_5 =2\alpha\gamma,\qquad&& c_6 =2(3+\alpha\beta).& \label{parameter-matching}
 \end{alignat}}%
These assignments define a choice of scale on $M$ and the parameters for the energy \eqref{gauged-Skyrme-energy} up to a~constant factor in terms of three free parameters $\alpha, \beta,\gamma\in\R$. In this case, the BPS bound~\eqref{BPS-bound} holds, and is saturated by configurations satisfying the BPS equations \eqref{BPS1} and~\eqref{BPS2}.
The values of these parameters play a key role in understanding the possible solutions of the equations~\eqref{BPS1} and~\eqref{BPS2}.

\subsection[Necessary conditions on the pullback of the moment map]{Necessary conditions on $\boldsymbol{\phi^{\ast A}\mu^\sharp}$}\label{sec:nec-cond-BPS1}

Having established topological energy bounds for the functional \eqref{gauged-Skyrme-energy}, and the BPS equations~\eqref{BPS1} and~\eqref{BPS2}, it is natural to ask if and when solutions exist. While we have not been able to answer this question in full generality, we have been able to classify solutions for specific choices of $N$ and $G$. In this section we shall discuss some general ideas which may be used to solve \eqref{BPS1} and \eqref{BPS2}, which are similar in spirit to the arguments we used in the ungauged case.

Like with \eqref{ungauged bog eq}, it is useful to view the left-hand side of the first of our BPS equation \eqref{BPS1} as the sum of three maps $\Lambda^1N\to\Lambda^2M$. Because of the presence of the third term $3\phi^{\ast A}\mu^\sharp$, there is no a priori reason to only consider the cases where $\d^A\phi$ has zero or full rank; in fact, as we shall see later, there are interesting solutions which occur in other cases which were not possible in the ungauged equations. However, as we show below, the possible ranks of the maps involved are constrained by the rank of $\d^A\phi$. This will allow us to strictly constrain the possible solutions of \eqref{BPS1}.

First consider when $\d^A\phi$ has full rank. This means the map $\phi^{\ast A}\colon \Lambda^1N\to\Lambda^1M$ is invertible, and so we may write \eqref{BPS1} equivalently as
\begin{align}\label{metric-eqn}
 \star_{M}(v)=\phi^{\ast A}\star_{N}\big(\big(\phi^{\ast A}\big)^{-1}(v)\big)+3\phi^{\ast A}(\mu^\sharp)\big(\big(\phi^{\ast A}\big)^{-1}(v)\big)
\end{align}
for all $v\in\Lambda^1M$. Here the action of $3\phi^{\ast A}(\mu^\sharp)$ on $(\phi^{\ast A})^{-1}(v)$ is given by the pairing between $TN$ and $\Lambda^1N$. We aim to view \eqref{metric-eqn} as an equation which determines a metric $g_M$. As it stands,~\eqref{metric-eqn} determines a map $\star_{M}\colon \Lambda^1M\to\Lambda^2M$, not a metric tensor. The space of linear maps $\Lambda_p^1M\to\Lambda_p^2M$ has dimension 9, while the space of metric tensors has dimension 6, so, not every linear map $\Lambda_p^1M\to\Lambda_p^2M$ arises as the Hodge star of a metric. The ones that do, and hence the solutions of \eqref{metric-eqn}, are constrained by the following lemma.
\begin{Lemma}\label{lem:star-constraint}
The Hodge star $\star_{M}\colon \Lambda^1_pM\to\Lambda^2_pM$ of a riemannian metric $g_M$ satisfies
\begin{equation}
\label{star identity}
\tr\bigl(\iota_V\circ\star_M\colon \Lambda^1_pM\to\Lambda^1_pM\bigr)=0\qquad\text{for all }V\in T_pM.
\end{equation}
Furthermore, the map $g_M\mapsto \star_{M}$ is a homeomorphism from the space of riemannian metrics on~$T_pM$ to an open subset of the set of all maps $\star$ satisfying \eqref{star identity}.
\end{Lemma}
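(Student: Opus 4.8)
The plan is to verify the trace identity \eqref{star identity} by a direct frame computation, and then to establish the homeomorphism claim by a dimension count together with an inverse-function-theorem / open-mapping argument. For the first part, fix $p\in M$ and choose a $g_M$-orthonormal basis $e^1,e^2,e^3$ of $\Lambda^1_pM$ with dual frame $E_1,E_2,E_3$ of $T_pM$. In this frame $\star_M e^1 = e^2\wedge e^3$, and cyclically. Given $V\in T_pM$, write $V = V^aE_a$. Then $\iota_V\circ\star_M$ sends $e^1\mapsto \iota_V(e^2\wedge e^3) = V^2 e^3 - V^3 e^2$, and cyclically, so the matrix of $\iota_V\circ\star_M$ in the basis $(e^1,e^2,e^3)$ is the antisymmetric matrix with entries $\pm V^a$; its diagonal is zero, hence the trace vanishes. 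This proves \eqref{star identity} for all $V$, and it is manifestly basis-independent.

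For the second part, I would first record that the set of linear maps $\star\colon \Lambda^1_pM\to\Lambda^2_pM$ satisfying \eqref{star identity} is a linear subspace: using a fixed volume form to identify $\Lambda^2_pM\cong \Lambda^1_pM$ (equivalently $(\Lambda^1_pM)^\ast$ via the pairing $u\wedge v = \langle u,v\rangle_{\mathrm{vol}}\,\mathrm{vol}$), the map $\iota_V\circ\star_M$ becomes left-multiplication-type in a way that makes $V\mapsto \tr(\iota_V\circ\star)$ linear in $\star$; imposing this to vanish for all $V$ in a $3$-dimensional space cuts out a codimension-$3$ subspace of the $9$-dimensional space $\mathrm{Hom}(\Lambda^1_pM,\Lambda^2_pM)$, so it has dimension $6$. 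This matches the dimension of the space of riemannian inner products on $T_pM$ (symmetric positive-definite $3\times 3$ matrices, up to the choice of a reference basis). The assignment $g_M\mapsto\star_M$ is smooth (polynomial in the entries of $g_M$ and in $1/\sqrt{\det g_M}$), and injective: one recovers $g_M$ up to scale and orientation from $\star_M$ as the composition $\Lambda^1 \xrightarrow{\star_M}\Lambda^2\xrightarrow{\star_M}\Lambda^3\cong\R$ paired appropriately — more concretely $\star_M$ determines the conformal class and the volume form, hence $g_M$.

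To upgrade injectivity plus the dimension match to a homeomorphism onto an \emph{open} subset, I would compute the derivative of $g_M\mapsto\star_M$ at an arbitrary $g_M$ and show it is injective as a map between $6$-dimensional spaces (the target being the tangent space to the codimension-$3$ subspace cut out by \eqref{star identity}, which is the whole subspace since that set is linear). Injectivity of the derivative follows from injectivity of the map together with homogeneity/equivariance under $GL$, or can be checked directly in an orthonormal frame: an infinitesimal deformation $g_M\mapsto g_M + t\,h$ changes $\star_M$ in a way that vanishes only if $h=0$ (the trace-free part of $h$ changes the conformal structure, the trace part changes the volume). By the inverse function theorem the map is then a local diffeomorphism onto its image, in particular open, and being a continuous open injection it is a homeomorphism onto an open subset of the set of maps satisfying \eqref{star identity}.

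The main obstacle I anticipate is the last step — showing the differential of $g_M\mapsto \star_M$ is injective (equivalently, that no nonzero symmetric $h$ is annihilated). The trace identity \eqref{star identity} only captures $3$ of the $9$ linear constraints, so one must argue that the remaining constraints cutting out the image are exactly the nonlinear positive-definiteness conditions and that the image fills out a full-dimensional open set rather than a lower-dimensional subvariety of the $6$-dimensional linear space. The cleanest route is probably to reduce to the standard flat model by $GL(3)$-equivariance: the map $g\mapsto\star_g$ intertwines the $GL(3)$ action on inner products with a $GL(3)$ action on $\mathrm{Hom}(\Lambda^1,\Lambda^2)$, the orbit of the standard Euclidean inner product is open in the positive-definite cone, and one checks the differential is injective at the single point $g = \mathrm{Id}$ by the explicit frame computation above; equivariance then propagates this to all $g_M$.
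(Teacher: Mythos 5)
Your proof of the trace identity is the same orthonormal-frame computation as the paper's, so that part needs no comment. For the second claim the two arguments genuinely diverge. The paper's route is: note continuity of $g_M\mapsto\star_M$, note that the constraint \eqref{star identity} cuts the $9$-dimensional space $\mathrm{Hom}\bigl(\Lambda^1_pM,\Lambda^2_pM\bigr)$ down to dimension $6$, and then write down an \emph{explicit} continuous left inverse, namely the quadratic reconstruction formula $(g_M)_\star(V,W)=\tfrac12\sum_{i,j}\star e^j(V,E_i)\,\star e^i(E_j,W)$ of equation \eqref{recover-metric}; this formula is not incidental, since it is reused later in the paper to solve \eqref{metric-eqn} for the unknown metric $g_M$. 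You instead establish injectivity qualitatively (conformal class plus volume), check injectivity of the differential at the Euclidean point, propagate it by $GL(3)$-equivariance, and invoke the inverse function theorem. Your argument is correct and in fact yields slightly more (a local diffeomorphism, not just a homeomorphism), at the cost of requiring smoothness and a linearisation computation; the paper's is more elementary and produces the inversion formula as a usable byproduct. Two small points you should tighten: (i) you assert without verification that the three conditions $\tr(\iota_{E_a}\circ\star)=0$ are linearly independent — this is true (e.g.\ the family $\star_\alpha(u)=\alpha\wedge u$ has $\tr(\iota_V\circ\star_\alpha)=2\alpha(V)$, so the trace map is onto $\Lambda^1_pM$) and is needed for the dimension count $9-3=6$; (ii) your injectivity paragraph ("up to scale and orientation... hence $g_M$") is loose as written — the clean statement is that $(u,v)\mapsto u\wedge\star_M v$ equals $g_M\otimes\mathrm{vol}_{g_M}$, from which $g_M$ is recovered after fixing a reference element of $\Lambda^3_pM$ and using the homogeneity $\star_{\lambda g}=\lambda^{1/2}\star_g$ to pin the scale. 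Neither point is a genuine gap.
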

\begin{proof}
For the first part, we may choose a basis $e^i$ for $\Lambda^1_pM$ which is oriented and orthonormal with respect to $g_M$. In this basis, $\star_{M}e^i=\frac12 \varepsilon_{ijk}e^j\wedge e^k$ and so letting $E_i$ denote the dual basis for $T_pM$, we have{\samepage
\begin{equation*}
\sum_j \iota_{E_j}\star_{M} e^j = \sum_{j,k,l}\iota_{E_j} \tfrac{1}{2}\epsilon_{jkl}e^k\wedge e^l=\sum_{j,k,l}\big(\delta^{jk}e^l-\delta^{jl}e^k\big)\epsilon_{jkl}=0,
\end{equation*}
as required.}

For the second part, we note that $g_M\mapsto \star_{M}$ is clearly continuous, and that the space of maps~$\star$ satisfying \eqref{star identity} has the same dimension as the space of metric tensors, namely $6$. Therefore, it suffices to exhibit a continuous left inverse $\star\mapsto (g_M)_\star$ of $g_M\mapsto \star_{M}$. A suitable map, written in terms of dual bases $e^i$ and $E_i$, is
\begin{equation}\label{recover-metric}
(g_M)_\star(V,W) = \tfrac12 \sum_{i,j=1}^3 \star e^j(V,E_i)\star e^i(E_j,W)\qquad \text{for all } V,W\in T_pM.
\end{equation}
This is clearly independent of the choice of basis. It is also straightforward to check that if $\star=\star_{M}$ then $(g_M)_\star = g_M$.
\end{proof}
Now consider the case where $\d^A\phi$ is not full rank. These cases are constrained by the following lemma.
\begin{Lemma}\label{lem:nullity}
 If $\rk\big(\d^A\phi\big)<3$, then $\rk\big(\phi^{\ast A}\star_N\big)=\max\bigl\{\rk\big(\d^A\phi\big)-1,0\bigr\}$.
\end{Lemma}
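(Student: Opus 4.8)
The plan is to analyse the map $\phi^{\ast A}\colon \Lambda^1_pN\to\Lambda^1_pM$ at a point $p$ where $\rk(\d^A\phi)=r<3$, and track how the rank behaves under the composition with $\star_N$ followed by pulling forms back. The key observation is the one already used in the ungauged case: if $u\in\Lambda^1N$ lies in the kernel of $\phi^{\ast A}$, then $\phi^{\ast A}(u\wedge v)=0$ for every $v\in\Lambda^1N$, because pullback is an algebra homomorphism on forms (the pullback of a wedge is the wedge of pullbacks). So the first step is to choose a convenient basis $e^1,\dots,e^3$ of $\Lambda^1_pN$ adapted to $\ker(\phi^{\ast A})$: since $\rk(\phi^{\ast A})=r$ on $1$-forms, arrange that $e^1,\dots,e^r$ map to linearly independent elements of $\Lambda^1_pM$ and $e^{r+1},\dots,e^3$ lie in the kernel.

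Next I would compute $\phi^{\ast A}(\star_N e^i)$ for each $i$. Using an orthonormal oriented frame for $\Lambda^1_pN$ (which we may take the $e^i$ to be), $\star_N e^i = \tfrac12\varepsilon_{ijk}\,e^j\wedge e^k$, so $\phi^{\ast A}(\star_N e^i)$ is a linear combination of the $2$-forms $\phi^{\ast A}(e^j)\wedge\phi^{\ast A}(e^k)$. If $r\le 1$, then at most one of the $\phi^{\ast A}(e^j)$ is nonzero, so every product $\phi^{\ast A}(e^j)\wedge\phi^{\ast A}(e^k)$ vanishes and $\phi^{\ast A}\star_N$ is the zero map, giving rank $0=\max\{r-1,0\}$. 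If $r=2$, say $\phi^{\ast A}(e^1),\phi^{\ast A}(e^2)$ are independent and $\phi^{\ast A}(e^3)=0$, then $\phi^{\ast A}(\star_N e^3)=\phi^{\ast A}(e^1)\wedge\phi^{\ast A}(e^2)\ne 0$ while $\phi^{\ast A}(\star_N e^1)$ and $\phi^{\ast A}(\star_N e^2)$ each involve a factor $\phi^{\ast A}(e^3)=0$ and hence vanish. So the image of $\phi^{\ast A}\star_N$ is spanned by the single nonzero $2$-form $\phi^{\ast A}(e^1)\wedge\phi^{\ast A}(e^2)$, giving rank $1=\max\{r-1,0\}$. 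This exhausts the cases $r\in\{0,1,2\}$ covered by the hypothesis $\rk(\d^A\phi)<3$.

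There is one point needing care: one must justify that $\rk(\phi^{\ast A}\colon\Lambda^1_pN\to\Lambda^1_pM)$ equals $\rk(\d^A\phi)$ at the point $p$. This follows because $\d^A\phi\in T^\ast_pM\otimes\phi^\ast T_pN$ is precisely the pullback of the tautological section $I$, so the induced map $\Lambda^1_pN=(\phi^\ast T_pN)^\ast\to\Lambda^1_pM=T^\ast_pM$ on cotangent vectors is the transpose of $\d^A\phi\colon T_pM\to T_pN$, and a linear map and its transpose have equal rank. So $r$ in the argument above is indeed $\rk(\d^A\phi)$, and the stated formula follows. I do not anticipate a genuine obstacle here; the only mildly delicate step is keeping the bookkeeping of which wedge products survive straight, and being explicit that pullback of equivariant forms respects wedge products — but this is immediate from the definition \eqref{local pullback}, in which $\phi^{\ast A}\beta$ for $\beta\in\Omega^2N$ is built from $\d^A\phi^{\mu_1}\wedge\d^A\phi^{\mu_2}$.
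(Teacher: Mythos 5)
Your proposal is correct and follows essentially the same route as the paper: both arguments rest on the fact that $\phi^{\ast A}$ respects wedge products, so any $2$-form with a factor in $\ker\big(\phi^{\ast A}\big)$ pulls back to zero, which kills the rank $0$ and $1$ cases, and in the rank $2$ case both exhibit the single surviving $2$-form $\phi^{\ast A}e^1\wedge\phi^{\ast A}e^2\neq 0$. Your explicit remark that $\rk\big(\phi^{\ast A}\colon\Lambda^1N\to\Lambda^1M\big)=\rk\big(\d^A\phi\big)$ is a small point the paper leaves implicit, but it does not change the argument.
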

\begin{proof}
 Suppose $u\in\Lambda^1N$ is such that $\phi^{\ast A}u=0$. Then $\phi^{\ast A}(u\wedge v)=0$ for all $v\in\Lambda^1N$. So as $\star$ is an isomorphism the result holds when $\phi^{\ast A}$ has rank $0$ or $1$. In the rank $2$ case, so far all this shows is $\dim\ker\phi^{\ast A}\star_{N}\geq2$. However, by definition, in this case there exists non-zero $e^1, e^2\in\Lambda^1N$ such that $\phi^{\ast A}e^1$, $\phi^{\ast A}e^2$ are linearly independent, and hence $e^1\wedge e^2$ is a non-zero vector not in the kernel of $\phi^{\ast A}\star_{N}$.
\end{proof}
In the particular case, where $\d^A\phi$ has rank $2$, we have a following stronger constraint.
\begin{Lemma}\label{lem:rank-2-reduction}
 If $\d^A\phi$ has rank $2$, then $\star_{M}\phi^{\ast A}+\phi^{\ast A}\star_{N}\colon \Lambda^1N\to\Lambda^2M$ has full rank.
\end{Lemma}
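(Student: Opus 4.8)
The plan is to show that the map $T:=\star_{M}\phi^{\ast A}+\phi^{\ast A}\star_{N}\colon\Lambda^1N\to\Lambda^2M$ is injective; since its domain and codomain are both three-dimensional, injectivity is equivalent to having full rank. All statements below are pointwise, and I suppress the basepoint.

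First I would fix adapted bases. As $\rk(\d^A\phi)=2$, the map $\phi^{\ast A}\colon\Lambda^1N\to\Lambda^1M$ has rank $2$; choose $e^3$ spanning its kernel, extend to a basis $e^1,e^2,e^3$ of $\Lambda^1N$, and put $f^i=\phi^{\ast A}e^i$, so that $f^1,f^2$ are linearly independent and $f^3=0$. Since $\phi^{\ast A}$ is an algebra homomorphism on $\Omega^\ast N$ (as already used in the proof of Lemma~\ref{lem:nullity}), we have $\phi^{\ast A}(e^1\wedge e^2)=f^1\wedge f^2\neq0$ and $\phi^{\ast A}(e^i\wedge e^3)=0$ for $i=1,2$. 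Writing $\star_N e^i=m_i\,e^1\wedge e^2$ plus terms each containing a factor $e^3$, applying $\phi^{\ast A}$ gives $\phi^{\ast A}\star_N e^i=m_i\,f^1\wedge f^2$, while $\star_M\phi^{\ast A}e^i$ equals $\star_M f^1$, $\star_M f^2$, $0$ for $i=1,2,3$. I would also record that $m_3\neq0$: wedging the expression for $\star_N e^3$ with $e^3$ annihilates every term except the first, so $e^3\wedge\star_N e^3=m_3\,e^1\wedge e^2\wedge e^3$, and the left-hand side is a positive multiple of $V_N$ by the defining property of $\star_N$.

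The key step, which I expect to be the only non-routine part, is the claim that $f^1\wedge f^2\notin\mathrm{span}\{\star_M f^1,\star_M f^2\}$; this is precisely where positive-definiteness of $g_M$ enters. Assuming $f^1\wedge f^2=s\,\star_M f^1+t\,\star_M f^2$, I would wedge with $f^1$ and with $f^2$ and use $f^i\wedge\star_M f^j=g_M(f^i,f^j)\,V_M$ (with $V_M$ the volume form of $g_M$) to obtain the homogeneous linear system $s\,g_M(f^1,f^1)+t\,g_M(f^1,f^2)=0$, $s\,g_M(f^1,f^2)+t\,g_M(f^2,f^2)=0$, whose matrix is the Gram matrix of the linearly independent pair $f^1,f^2$ and so is invertible; hence $s=t=0$ and $f^1\wedge f^2=0$, a contradiction. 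Therefore $\{\star_M f^1,\star_M f^2,f^1\wedge f^2\}$ is a basis of $\Lambda^2M$. To conclude, if $u=u_1e^1+u_2e^2+u_3e^3\in\ker T$ then $u_1\,\star_M f^1+u_2\,\star_M f^2+(u_1m_1+u_2m_2+u_3m_3)\,f^1\wedge f^2=0$, which forces $u_1=u_2=0$ and then $u_3m_3=0$, whence $u_3=0$ since $m_3\neq0$. Thus $T$ is injective and the lemma follows.
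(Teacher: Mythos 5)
Your proof is correct and follows essentially the same route as the paper's: both are pointwise linear-algebra arguments in an adapted basis, resting on the two facts that $\star_N$ carries $\ker\phi^{\ast A}$ out of the kernel of $\phi^{\ast A}$ on $2$-forms (your $m_3\neq0$, which is the paper's trivial-kernel-intersection step in disguise) and that $\phi^{\ast A}\big(e^1\wedge e^2\big)$ is transverse to $\star_M$ of the image of $\phi^{\ast A}$ (your Gram-matrix computation, the paper's orthogonality observation). The only cosmetic differences are that you adapt the basis to $\ker\phi^{\ast A}$ and prove injectivity, whereas the paper adapts it to $\ker\big(\phi^{\ast A}\star_N\big)$ and proves surjectivity.
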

\begin{proof}
First, we show $\d^A\phi$ and $\phi^{\ast A}\star_{N}$ have trivially intersecting kernels. To do this let $u\in\Lambda^1N$ and suppose that $\phi^{\ast A}u=\phi^{\ast A}\star_{N}u=0$. Now, there exist $v,w\in\Lambda^1N$ such that $\star_{N}u=v\wedge w$. Then $\phi^{\ast A}v\wedge\phi^{\ast A}w=0$, so we can find $(a,b)\neq(0,0)$ such that $a\phi^{\ast A}v+b\phi^{\ast A}w=0$. Then $av+bw$ and $u$ are two linearly independent vectors in $\ker\big(\phi^{\ast A}\big)$, contradicting the assumption that $\rk\big(\phi^{\ast A}\big)=2$.

Now let $f^1,f^2\in\Lambda^1N$ be a basis for $\ker\big(\phi^{\ast A}\star_N\big)$. Then $\phi^{\ast A}f^1$, $\phi^{\ast A}f^2$ are linearly independent, and hence form a basis for the image of $\phi^{\ast A}$, because $\ker\big(\phi^{\ast A}\big)\cap\ker\big(\phi^{\ast A}\star_N\big)=\{0\}$. Then the triple
\begin{align*}
\star_{M}\phi^{\ast A}f^1,\qquad\star_{M}\phi^{\ast A}f^2,\qquad\phi^{\ast A}\big(f^1\wedge f^2\big)
\end{align*}
are linearly independent: $\phi^{\ast A}\big(f^1\wedge f^2\big)$ is non-zero because $\phi^{\ast A}f^1$, $\phi^{\ast A}f^2$ are linearly independent, and is orthogonal to both $\star_{M}\phi^{\ast A}f^1$, $\star_{M}\phi^{\ast A}f^2$ by definition of the Hodge star. Now let $f^3=\star_{N}\big(f^1\wedge f^2\big)$. Since $\phi^{\ast A}f^1$, $\phi^{\ast A}f^2$ are a basis for ${\rm im}\big(\phi^{\ast A}\big)$, $\phi^{\ast A}f^3$ is a linear combination of $\phi^{\ast A}f^1$ and $\phi^{\ast A}f^2$. Therefore,
\begin{align*}
& \bigl(\star_{M}\phi^{\ast A}+\phi^{\ast A}\star_{N}\bigr)f^1=\star_{M}\phi^{\ast A}f^1,\\
& \bigl(\star_{M}\phi^{\ast A}+\phi^{\ast A}\star_{N}\bigr)f^2=\star_{M}\phi^{\ast A}f^2,\\
& \bigl(\star_{M}\phi^{\ast A}+\phi^{\ast A}\star_{N}\bigr)f^3=\star_{M}\phi^{\ast A}f^3+\phi^{\ast A}\big(f^1\wedge f^2\big),
\end{align*}
are linearly independent. So $\star_{M}\phi^{\ast A}+\phi^{\ast A}\star_{N}$ has rank $3$, i.e., is full rank.
\end{proof}

Putting this all back into the context of equation \eqref{BPS1}, we can summarise the results in this section as follows.
\begin{Proposition}\label{prop:nec-for-BPS1}
 Let $(\phi,A)$ be a solution of \eqref{BPS1} and let $n$ be the rank of $\d^A\phi$. Then
 \begin{enumerate}\itemsep=0pt
 \item[$1.$] If $n=0$ or $1$, then $\phi^{\ast A}\mu^\sharp$ has rank $n$ and $\ker\phi^{\ast A}\mu^\sharp=\ker\d^A\phi$.
 \item[$2.$] If $n=2$, then $\phi^{\ast A}\mu^\sharp$ is full rank.
 \item[$3.$] If $n=3$, then the linear map
\begin{align*}
 \iota_V\circ\big(\phi^{\ast A}\mu^\sharp\big)\circ\big(\phi^{\ast A}\big)^{-1}\colon \ \Lambda^1M\to\Lambda^1M
\end{align*}
is traceless for all $V\in TM$. Here we regard $\phi^{\ast A}\mu^\sharp\in\Lambda^2M\otimes TN$ as a map $\Lambda^1N\to\Lambda^2M$.
 \end{enumerate}
\end{Proposition}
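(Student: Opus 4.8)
The statement is essentially a repackaging of Lemmas~\ref{lem:star-constraint}, \ref{lem:nullity} and~\ref{lem:rank-2-reduction}, so the plan is to first recast \eqref{BPS1} in the language of those lemmas and then apply each lemma in turn. I would begin by recalling, from \eqref{local pullback} with $p=0$, that equivariant pullback restricted to ordinary forms is the algebra homomorphism extending $\d^A\phi\colon \Lambda^1 N\to\Lambda^1 M$, so that $\phi^{\ast A}(u\wedge v)=\phi^{\ast A}u\wedge\phi^{\ast A}v$; and since $\Sigma$ represents $\star_N$ (exactly as in the ungauged discussion around \eqref{BPS-comm-diagram}), the quantity $\phi^{\ast A}\Sigma$ is the composite $\phi^{\ast A}\circ\star_N\colon \Lambda^1 N\to\Lambda^2 N\to\Lambda^2 M$. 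Using $g_N$ to regard $\Lambda^2 M\otimes\phi^\ast TN$-valued tensors as linear maps $\Lambda^1 N\to\Lambda^2 M$, equation \eqref{BPS1} becomes the identity
\begin{align*}
 3\,\phi^{\ast A}\mu^\sharp=\star_M\circ\d^A\phi-\phi^{\ast A}\circ\star_N\colon \ \Lambda^1 N\to\Lambda^2 M,
\end{align*}
in which $n=\rk(\d^A\phi)$ is the rank of $\phi^{\ast A}$ on $\Lambda^1 N$. Each of the three cases will then follow by substituting a lemma into this identity.

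For $n=0$ or $1$, Lemma~\ref{lem:nullity} gives $\rk(\phi^{\ast A}\circ\star_N)=\max\{n-1,0\}=0$, so the identity collapses to $3\,\phi^{\ast A}\mu^\sharp=\star_M\circ\d^A\phi$; as $\star_M$ is an isomorphism, $\phi^{\ast A}\mu^\sharp$ then has rank $n$ and kernel $\ker\d^A\phi$. For $n=2$, I would invoke Lemma~\ref{lem:rank-2-reduction}: its proof uses only $\star_N^2=1$ and the triviality of $\ker(\phi^{\ast A})\cap\ker(\phi^{\ast A}\star_N)$, and goes through unchanged when $\phi^{\ast A}\circ\star_N$ enters with a minus sign, so $\star_M\circ\d^A\phi-\phi^{\ast A}\circ\star_N$ has rank $3$ and hence $\phi^{\ast A}\mu^\sharp$ is full rank. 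For $n=3$, $\phi^{\ast A}$ is invertible on $\Lambda^1 N$ and the identity becomes \eqref{metric-eqn},
\begin{align*}
 \star_M=\phi^{\ast A}\circ\star_N\circ\big(\phi^{\ast A}\big)^{-1}+3\,\big(\phi^{\ast A}\mu^\sharp\big)\circ\big(\phi^{\ast A}\big)^{-1}.
\end{align*}
Transporting a $g_N$-orthonormal oriented coframe at $\phi(p)$ under $\phi^{\ast A}$ yields a coframe at $p$; declaring it orthonormal and oriented defines a metric $g'$ on $T_pM$ whose Hodge star is precisely the first term on the right. By Lemma~\ref{lem:star-constraint}, both $\star_M$ and $\star_{g'}$ satisfy $\tr(\iota_V\circ\star)=0$ for every $V\in T_pM$, so subtracting shows $\tr\big(\iota_V\circ(\phi^{\ast A}\mu^\sharp)\circ(\phi^{\ast A})^{-1}\big)=0$ for all $V$, which is the claim for $n=3$.

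I do not expect a substantial obstacle: once \eqref{BPS1} is written as the displayed identity of maps, all three cases are immediate consequences of the lemmas, and the genuine content lives in those lemmas. The two points that deserve a moment's care are, first, the bookkeeping identification $\phi^{\ast A}\Sigma=\phi^{\ast A}\circ\star_N$ --- that is, the consistent passage between $\phi^\ast TN$-valued forms on $M$ and linear maps between exterior powers, so that the ranks and kernels being compared really belong to the same maps --- and, second, in the case $n=2$, noting that $\phi^{\ast A}\circ\star_N$ appears in \eqref{BPS1} with the opposite sign to the one in Lemma~\ref{lem:rank-2-reduction}, while the proof of that lemma is insensitive to this sign.
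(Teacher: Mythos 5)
Your proposal is correct and follows essentially the same route as the paper: recast \eqref{BPS1} as an identity of maps $\Lambda^1N\to\Lambda^2M$ and then apply Lemmas~\ref{lem:nullity}, \ref{lem:rank-2-reduction} and \ref{lem:star-constraint} to the three rank cases, exactly as the paper's (much terser) proof does. Your extra observations --- that $\phi^{\ast A}\star_N$ enters \eqref{BPS1} with the opposite sign to Lemma~\ref{lem:rank-2-reduction} but the lemma's argument is insensitive to this, and that the first term in \eqref{metric-eqn} is the Hodge star of the pulled-back metric and hence satisfies \eqref{star identity} --- are accurate and merely make explicit what the paper leaves implicit.
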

\begin{proof}
 The first two cases follow immediately from Lemmas \ref{lem:nullity} and \ref{lem:rank-2-reduction} and consistency of~\eqref{BPS1}. For the final case, we apply Lemma \ref{lem:star-constraint}. Since the maps $\star_{M}$ and $\phi^{\ast A}\circ\star_{N}\circ(\phi^{\ast A})^{-1}$ are defined from metrics, they automatically must satisfy the condition \eqref{star identity}, and so consistency of~\eqref{BPS1} requires the stated condition.
\end{proof}

These necessary conditions will allow us to either rule out possible solutions, or to constrain the possibilities for solutions. In particular, in the full rank case, the constraint then allows us to recover a metric $g_M$, as in the proof of Lemma \ref{lem:star-constraint} in equation \eqref{recover-metric}, from \eqref{metric-eqn}. We shall return to these later via some explicit examples.

\section[Solutions with principal orbit S\^{}1]{Solutions with principal orbit $\boldsymbol{S^1}$}\label{sec:princ-orbit-S1}

We now turn our attention solutions of \eqref{BPS1} and~\eqref{BPS2}. The different choices of $G$ and its action on $N$ lead to different solutions. We start our discussion of solutions in this section with the simplest case, where the structure group $G=\U(1)$ acts nontrivially on $N$. This means the target manifold $N$ may be viewed as a fibration over a surface $C$ with typical fibre $S^1$,
\begin{equation*}
\begin{tikzcd}
S^1\arrow{r}{}&N\arrow{d}{}\\
&\hphantom{.}C.
\end{tikzcd}
\end{equation*}
The $S^1$ fibres may collapse above certain points in $C$.

\subsection[Invariant metric on N]{Invariant metric on $\boldsymbol{N}$}

We begin by describing the geometry of $N$ in more detail. The Lie algebra $\uu(1)$ of $\U(1)$ is isomorphic to $\R$, and is generated by $1\in\R$. For brevity, we will write $\nu=\nu(1)$, $\mu=\mu(1)$, $\mu^\sharp=\mu^\sharp(1)$ which generate the Killing field and moment map respectively. Since both $\mu$ and~$g_N$ are $\U(1)$-equivariant, we have that $\big[\nu,\mu^\sharp\big]=0$. Therefore, we can choose local coordinates $(\theta,x,y)$ on $N$ such that
\begin{equation}\label{action-moment-U(1)}
\nu = \frac{\partial}{\partial\theta},\qquad\text{and}\qquad \mu^\sharp=\frac{\partial}{\partial y}.
\end{equation}
The $\U(1)$-invariant metric can be written in the form
\begin{equation}\label{metric-circle-bundle}
g_N = f(x,y)(\d\theta+\omega)^2 + g_C,
\end{equation}
in which the 1-form $\omega$ and 2-tensor $g_C$ satisfy $\omega(\partial_\theta)=0$ and $g_C(\partial_\theta,\cdot)=0$. Invariance requires that the 1-form $\omega$ and symmetric tensor $g_C$ satisfy
\begin{align*}
 \mathcal{L}_{\partial_\theta}\omega=0,\qquad \mathcal{L}_{\partial_\theta}g_C=0.
\end{align*}
Moreover, since $g_N\big(\mu^\sharp,\nu\big)=\mu(\nu)=0$ by \eqref{moment-map-constraint}, it must be that $\omega(\partial_y)=0$. So
\begin{equation*}
\omega=\omega_x(x,y)\,\d x.
\end{equation*}
Note that by \eqref{moment-map-constraint} and \eqref{action-moment-U(1)}, $\mu=\mu_x\,\d x+\mu_y\,\d y=g_N(\bdy_x,\bdy_y)\,\d x+g_N(\bdy_y,\bdy_y)\,\d y$. So from \eqref{metric-circle-bundle} it follows that
\begin{equation*}
g_N = f(x,y)^2(\d\theta+\omega)^2 + \frac{\mu^2}{\mu_y} + h(x,y)\,\d x^2
\end{equation*}
for some function $h(x,y)$. The volume form associated with this metric is
\begin{equation*}
V_N = \sqrt{fh\mu_y}\,\d\theta\wedge\d x \wedge \d y.
\end{equation*}
Therefore, $\iota_\nu V_N = \sqrt{fh\mu_y}\, \d x \wedge \d y$, and by \eqref{moment-map-def} this must equal $\d\mu$. Therefore,
\begin{equation*}
f(x,y) = \frac{(\partial_x\mu_y-\partial_y\mu_x)^2}{h\mu_y}.
\end{equation*}
In summary, the metric on $N$ takes the form
\begin{equation}\label{metric-fixed-U(1)}
g_N = \frac{(\partial_x\mu_y-\partial_y\mu_x)^2}{h\mu_y}(\d\theta+\omega)^2 + h\,\d x^2 + \frac{1}{\mu_y}\mu^2.
\end{equation}

\subsection{Solutions of the BPS equations}

{\bf Solutions of \eqref{BPS1}.} We shall begin the analysis of the BPS equations in this case by first considering all possible solutions $(\phi,A)$ of \eqref{BPS1}. As outlined in Section \ref{sec:nec-cond-BPS1}, it is helpful to consider the possible ranks of $\d^A\phi$.

In the zero rank case, $\d^A\phi=0$ and \eqref{BPS1} implies $F=0$, so the solution is equivalent to the constant solutions in the ungauged model.

Now we argue that there are no solutions in the cases of rank $1$ or $2$. In the case $\rk\big(\d^A\phi\big)=1$, by Lemma \ref{lem:nullity} we have $\phi^{\ast A}\Sigma=0$, and so \eqref{BPS1} may be rearranged to
\begin{align}
\label{S1-rank1-BPS}
 \d^A\phi=3\star_{M}F\otimes\frac{\bdy}{\bdy y}.
\end{align}
This implies that $0=\phi^{\ast A}\d\theta:=\d\theta - A$ (using the definition of $\d^A$ from \eqref{cov-derivative}). So $F=\d A=\d^2\theta=0$, which via \eqref{S1-rank1-BPS} forces $\rk(\d^A\phi)=0$, a contradiction. In the case $\rk(\d^A\phi)=2$, Proposition \ref{prop:nec-for-BPS1} tells us that $\phi^{\ast A}\mu^\sharp$ has rank $3$. But $\phi^{\ast A}\mu^\sharp=F\otimes\tfrac{\bdy}{\bdy y}$ has at most rank $1$, so this case also yields no solutions of \eqref{BPS1}.

It remains to consider the case where $\d^A\phi$ has full rank. Thus $\d^A\phi$ is a linear isomorphism $TM\to TN$. So there exists a unique vector field $V$ on $M$ such that
\begin{equation*}
\d^A\phi(V)=\mu^\sharp=\tfrac{\partial}{\partial y}.
\end{equation*}
Using Proposition \ref{prop:nec-for-BPS1}\,(3), we thus require that
\begin{equation*}
\iota_V F = 0.
\end{equation*}
Writing $\phi$ using our local coordinates on $N$ as $\phi=(\theta,x,y)$, we have
\begin{equation*}
\phi^{\ast A}\d\theta = \d\theta-A,\qquad\phi^{\ast A}\d x = \d x,\qquad\phi^{\ast A}\d y = \d y.
\end{equation*}
It follows that
\begin{equation*}
\iota_V (\d\theta-A) = \d\theta\big(\mu^\sharp\big) = \d\theta(\partial_y)=0.
\end{equation*}
Therefore,
\begin{equation*}
\mathcal{L}_V(\d\theta-A) = \d\iota_V(\d\theta-A) -\iota_V F = 0.
\end{equation*}
So $\d\theta-A$ is invariant under the flow generated by $V$.

We now aim to solve \eqref{BPS1} for the metric $g_M$ (as in \eqref{metric-eqn} and the proof of Lemma \ref{lem:star-constraint}). To do this we fix a convenient gauge. Specifically, we first choose a gauge in which $\iota_V A=0$. Then, since $\iota_VF = 0$, it follows that $\mathcal{L}_VA=0$. So this gauge choice is preserved by gauge transformations $A\mapsto A+\d\lambda$ in which $\mathcal{L}_V\lambda=0$. We may use this gauge freedom to set one of the two remaining components of $A$ to zero. Without loss of generality, we may therefore choose a gauge so that $A=A_x\,\d x$.

In this gauge $\phi^{\ast A} V_N=(\partial_x\mu_y-\partial_y\mu_x)\d\theta\wedge \d x\wedge \d y$. Since $\d^A\phi$ has full rank, this is non-zero, so $\theta$, $x$, and $y$ are a good system of local coordinates on $M$. In these coordinates $V=\frac{\partial}{\partial y}$. The pullback of the metric \eqref{metric-fixed-U(1)} on $N$ is
\begin{equation*}
\phi^{\ast A}g_N = \frac{(\partial_x\mu_y-\partial_y\mu_x)^2}{h\mu_y}(\d\theta+(\omega_x-A_x)\d x)^2 + h\,\d x^2 + \frac{\mu^2}{\mu_y}.
\end{equation*}
Solving \eqref{BPS1} for $g_M$ then gives
\begin{equation}\label{metric-sol-U1}
g_M = \left(1+3\frac{F_{\theta x}\mu_y}{(\partial_x\mu_y-\partial_y\mu_x)}\right)\left(\frac{(\partial_x\mu_y-\partial_y\mu_x)^2}{h\mu_y}(\d\theta+\omega-A)^2 + h\,\d x^2\right) + \frac{\mu^2}{\mu_y}.
\end{equation}

{\bf Solutions of \eqref{BPS2}.} We now address the second BPS equation \eqref{BPS2}. Clearly these are solved when $\alpha=\beta=\gamma=0$. As we shall see, in general this is the only non-trivial case compatible with the solutions of \eqref{BPS1}.

Suppose $\alpha=0$ and $\beta$, $\gamma$ are not both $0$. Since $\nu$ and $\mu^\sharp$ are linearly independent, \eqref{BPS2} then implies that $F=0$. This is compatible with the solutions of \eqref{BPS1} found in the previous subsection, however they are just solutions of the ungauged Skyrme model, i.e., $\phi$ is locally an~isometry or is (covariantly) constant.

When $\alpha\neq0$, again by linear-independence of $\nu$ and $\mu^\sharp$, \eqref{BPS2} implies that $\phi^{\ast A}\Sigma$ has rank~$1$ or~$0$. However, as we saw above, \eqref{BPS1} is inconsistent when $\d^A\phi$ has rank $1$ or $2$, so by Lemma~\ref{lem:nullity} the only possibility is the trivial solutions where $F=0$ and $\d^A\phi=0$.

Summarising, we have proved the following statement.

\begin{Theorem}\label{thm:classification-S1}
If $G=\U(1)$ acts non-trivially on $N$, then the BPS equations \eqref{BPS1} and~\eqref{BPS2} have solutions with $F\neq0$ only when $\alpha=\beta=\gamma=0$. When $\alpha=\beta=\gamma=0$, the solutions consist (locally) of the identity map $\phi\colon M=N\to N$, a choice of connection $A$ such that $\iota_\nu A = \iota_{\mu^\sharp}A=\mathcal{L}_{\mu^\sharp}A=0$,
and a metric $g_M$ given by
\begin{equation*}
g_M = g_\mu + (1+3\star_N (F\wedge\mu))g_C,
\end{equation*}
in which $g_\mu=\mu^2/\mu\big(\mu^\sharp\big)$ and $g_C=\phi^{\ast A}g_N-g_\mu$.
\end{Theorem}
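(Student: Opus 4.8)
The plan is to package the case analysis already carried out in the two preceding subsections (``Solutions of \eqref{BPS1}'' and ``Solutions of \eqref{BPS2}'') into a single statement, and then to verify that the remaining case $\alpha=\beta=\gamma=0$ produces exactly the configuration described. First I would recall that for $F\neq 0$ the analysis of \eqref{BPS2} forces $\alpha=\beta=\gamma=0$: if $\alpha\neq 0$ then linear independence of $\nu$ and $\mu^\sharp$ forces $\phi^{\ast A}\Sigma$ to have rank $\leq 1$, which by Lemma~\ref{lem:nullity} means $\d^A\phi$ has rank $\leq 2$, and the rank $1$ and rank $2$ cases were already shown to be inconsistent with \eqref{BPS1}, leaving only $F=0$; and if $\alpha=0$ with $(\beta,\gamma)\neq(0,0)$ then \eqref{BPS2} directly gives $F=0$. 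This establishes the first sentence of the theorem.

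Next I would treat the case $\alpha=\beta=\gamma=0$, in which \eqref{BPS2} is vacuous and only \eqref{BPS1} remains. Here I would invoke the rank analysis of $\d^A\phi$: ranks $0,1,2$ either give the ungauged constant solution or were shown to be inconsistent, so generically $\d^A\phi$ has full rank. Full rank means $\phi$ is a local diffeomorphism, and after identifying $M$ with $N$ via $\phi$ we may take $\phi$ to be the identity. The connection conditions $\iota_\nu A=\iota_{\mu^\sharp}A=\mathcal{L}_{\mu^\sharp}A=0$ come directly from the gauge-fixing performed in the BPS1 analysis: one sets $\iota_V A=0$ where $\d^A\phi(V)=\mu^\sharp$, which under the identification $\phi=\id$ is $V=\mu^\sharp$, and since $\iota_V F=0$ one gets $\mathcal{L}_V A=0$; the residual freedom is then used to kill the $\d\theta$ component, i.e.\ $\iota_\nu A=0$. (One should note $\mathcal{L}_\nu A=0$ follows automatically from $\U(1)$-invariance, so only the three listed conditions need be stated.)

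The last step is to rewrite the metric \eqref{metric-sol-U1} in the coordinate-free form claimed. From \eqref{metric-fixed-U(1)} the pulled-back metric splits as $\phi^{\ast A}g_N = g_\mu + g_C$ with $g_\mu = \mu^2/\mu(\mu^\sharp) = \mu^2/\mu_y$ the part spanned by $\d y$ and $g_C$ the remaining $(\d\theta+\omega)^2$ and $\d x^2$ pieces; this defines $g_C$ intrinsically as $\phi^{\ast A}g_N - g_\mu$. Comparing with \eqref{metric-sol-U1}, the conformal factor multiplying the $g_C$ part is $1 + 3F_{\theta x}\mu_y/(\partial_x\mu_y-\partial_y\mu_x)$, and I would identify this scalar with $1 + 3\star_N(F\wedge\mu)$: using $V_N = \sqrt{fh\mu_y}\,\d\theta\wedge\d x\wedge\d y$ with $\sqrt{fh\mu_y} = \partial_x\mu_y-\partial_y\mu_x$, and $F = F_{\theta x}\,\d\theta\wedge\d x$ (in the chosen gauge $A=A_x\,\d x$, with the $V$-invariance giving $\partial_y A_x$ absorbed), one computes $F\wedge\mu = F_{\theta x}\mu_y\,\d\theta\wedge\d x\wedge\d y$, whence $\star_N(F\wedge\mu) = F_{\theta x}\mu_y/(\partial_x\mu_y-\partial_y\mu_x)$. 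Substituting gives precisely $g_M = g_\mu + (1+3\star_N(F\wedge\mu))g_C$.

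The main obstacle is the final identification of the scalar conformal factor with the coordinate-free expression $\star_N(F\wedge\mu)$: one must be careful that $\star_N$ here acts on the $3$-form $F\wedge\mu$ on $N$ (pulled back via $\phi=\id$) and that the normalisation matches the volume form $V_N$ used throughout, including checking that the gauge choice does not introduce extra terms in $F\wedge\mu$ beyond $F_{\theta x}\mu_y\,\d\theta\wedge\d x\wedge\d y$ — in particular that any $\d y\wedge\d x$ component of $F$ allowed by $\iota_V F=0$ wedges to zero against $\mu$ modulo the $\mu_y\,\d y$ term. All other steps are direct citations of the rank lemmas and the explicit computations already displayed.
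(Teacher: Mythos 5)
Your proposal is correct and takes essentially the same route as the paper: the rank analysis of $\d^A\phi$ (via Lemma \ref{lem:nullity} and the trace condition of Proposition \ref{prop:nec-for-BPS1}) to dispose of ranks $0$, $1$, $2$ and to solve \eqref{BPS1} for $g_M$ in the full-rank case, combined with linear independence of $\nu$ and $\mu^\sharp$ to force $F=0$ in \eqref{BPS2} unless $\alpha=\beta=\gamma=0$; your final check that the conformal factor equals $1+3\star_N(F\wedge\mu)$ is precisely the identification between \eqref{metric-sol-U1} and the stated coordinate-free formula that the paper leaves implicit. The one slip is the parenthetical claim that $\mathcal{L}_\nu A=0$ follows automatically from $\U(1)$-invariance — it does not (the paper's own example has $A=A(x,\theta)\,\d x$) — but the theorem makes no such assertion and your argument nowhere uses it.
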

The theorem only says that $\phi$ is the identity map locally; it may not even be a bijection globally (for example $N$ may be a quotient of $M$ by a discrete group). However, if $\phi\colon N\to N$ is globally just the identity map then its topological degree is 1. This is because
\begin{equation*}
\int_N\phi^{\ast A}(V_N+\mu) = \int_N\big( V_N - A\iota_\nu V_N + \d A\wedge \mu\big) = \int_N\big(V_N + \d(A\wedge\mu)\big) = \int_N V_N.
\end{equation*}

\subsection{Example}

Consider the adjoint action of $\U(1)$ on $N=\SU(2)$. We identify $\SU(2)$ with $S^3$ and choose coordinates $(\theta,x,y)\in(0,2\pi)\times \big(0,\frac{\pi}{2}\big)\times (0,4\pi)$ such that
\begin{equation*}
\phi=\bigl( \sin x\cos\tfrac{y}{2},\cos x\cos \theta,\cos x\sin\theta , \sin x\sin\tfrac{y}{2}\bigr)
\end{equation*}
and $\nu=\frac{\partial}{\partial \theta}$. The round metric on $S^3$ in these coordinates is
\begin{equation*}
g_N = \cos^2(x)\,\d\theta^2 + \d x^2 + \tfrac{1}{4}\sin^2 (x)\,\d y^2.
\end{equation*}
Then $V_N=\frac12\sin (x)\cos (x) \d\theta\wedge \d x\wedge \d y$ and $\iota_\nu V_N =\d\mu$, where $\mu=\frac{1}{4}\sin^2 (x)\,\d y$. This choice of~$\mu$ leads to $\mu^\sharp=\frac{\partial}{\partial y}$, consistent with our earlier conventions.

When $\alpha=\beta=\gamma=0$, the energy function \eqref{gauged-Skyrme-energy} with BPS coefficients \eqref{parameter-matching} is given by
\begin{equation*}
E = \int_M \big(\big|L^A\big|^2 + \big|L^A\wedge L^A-\tfrac{3}{4}U^{-1}\{{\rm i}\sigma_3,U\}F\big|^2\big),
\end{equation*}
in which $L^A=U^{-1}\bigl(\d U + \tfrac{\rm i}{2}A[\sigma_3,U]\bigr)$, $U=\phi_4-{\rm i}\sum_{j=1}^3\phi_j\sigma_j$ and $\sigma_j$ are the Pauli matrices. This shows that the model includes a Yang--Mills term, but the coefficient of this term depends on~$U$.

Following the discussion above, solutions of \eqref{BPS1} are given by the identity map $\phi\colon S^3\to S^3$ and a gauge field $A=A(x,\theta)\,\d x$.
The metric on $M$ should then be chosen to be as in \eqref{metric-sol-U1}, which here takes the form
\begin{equation*}
g_M = \bigl(1+\tfrac{3}{2}{\partial_\theta A_x}\tan x\bigr)\bigl(\cos^2(x)\,\d\theta^2 + \d x^2\bigr) + \tfrac14\sin^2 (x)\,\d y^2.
\end{equation*}

\section[Solutions with principal orbit S\^{}2]{Solutions with principal orbit $\boldsymbol{S^2}$}\label{sec:princ-orbit-S2}

We now look to cases with $G=\SU(2)$. In this section, we consider where the stabiliser of a~typical point in $N$ is $\U(1)$. Then the principal orbits of the $\SU(2)$-action are $S^2\cong\SU(2)/\U(1)$, and $N$ is a compactification of the manifold
\begin{gather*}
 \II\times S^2,
\end{gather*}
where $\II$ is an interval. We begin our discussion by describing the geometry of $N$ in more detail.

\subsection[Geometry of I times S\^{}2]{Geometry of $\boldsymbol{\II\times S^2}$}

We model $S^2$ as the set of unit length elements of the Lie algebra $\su(2)$. More precisely, we choose the inner product $(X,Y)=-\tfrac{1}{2}\tr(XY)$ for $X, Y\in\su(2)$, and choose the orthonormal basis defined by Pauli matrices $-\ii\sigma_j$. Then $S^2$ is the set of $x=-\ii x^j\sigma_j\in\su(2)$ such that $(x,x)=1$. The metric on $S^2$ is induced from that on $\su(2)$ and can be written as
\begin{equation*}
g_{S^2}=(\d x,\d x)=\d x^j\,\d x^j.
\end{equation*}
The corresponding area 2-form is
\begin{align*}
\omega_{S^2}=-\tfrac{1}{4}\tr(x\,\d x\wedge\d x)=\tfrac{1}{2}\epsilon_{ijk}\, x^i\d x^j\wedge\d x^k.
\end{align*}
The action of $g\in \SU(2)$ is $g\cdot x = gxg^{-1}$. By choosing a suitable coordinate $\xi$ on $\II$, the most general metric on $N$ invariant under this action may be written as
\begin{align}
 g_N=h_1(\xi)^2\,\d\xi^2+h_2(\xi)^2\,g_{S^2},\label{metric-N-S2}
\end{align}
where $h_i\colon \II\to\R_{>0}$ are smooth functions. The associated volume form is
\begin{align}
\label{adjoint-nu}
 V_N=h_1(\xi)h_2(\xi)^2\,\d\xi\wedge\omega_{S^2}.
\end{align}
The section $\Sigma$ of $\Lambda^2 N\otimes TN$ that represents the Hodge star of the metric \eqref{metric-N-S2} is given by
\begin{align}
\label{adjoint-Sigma}
\Sigma = \star_{N} \d \xi\otimes\frac{\bdy}{\bdy \xi} + \star_{N}\d x = \frac{h_2(\xi)^2}{h_1(\xi)}\,\omega_{S^2}\otimes\frac{\bdy}{\bdy \xi} + h_1(\xi)\,\d\xi\wedge x\, \d x.
\end{align}
In this equation, we are representing tangent vectors to $S^2$ as $\su(2)$ matrices $X$ satisfying $(X,x)=0$. Equivalently, we are identifying $-\ii\sigma_j$ with $\bdy/\bdy x^j$.

From \eqref{nu-definition}, the linear map $\nu\colon \su(2)\to \Gamma(TN)$ describing the adjoint action is
\begin{align}
 \nu(X)=\frac{\d}{\d t}\Big|_{t=0}\exp(-tX)x\exp(tX)=[x,X].\label{adjoint-action}
\end{align}
It follows that
\begin{align*}
 \iota_{\nu(X)}V_N=-\tfrac{1}{2}h_1(\xi)h_2(\xi)^2\,\d\xi \wedge\tr\bigl(x\,\d x\, [x,X]\bigr) = 2h_1(\xi)h_2(\xi)^2\,\d\xi \wedge (\d x,X),
\end{align*}
where the second equality makes use of the identities $x^2=-1$ and $x\,\d x = -\d x\, x$. Finally, to determine the moment map $\mu\colon \g\to\Omega^1(N)$, we start by writing the most general form of such a map with the assumption of $\SU(2)$-equivariance. This is
\begin{align}\label{moment-map-adjoint}
 \mu(X)(\xi,x)=\eta_1(\xi)\,\d\xi\,(X,x)+\eta_2(\xi)\,(\d x,X)+\eta_3(\xi)\,\bigl(\d x,[X,x]\bigr),
\end{align}
where $\eta_1, \eta_2, \eta_3\colon \II\to\R$ are smooth. Imposing the constraint $\d\mu(X)=\iota_{\nu(X)}V_N$ forces
\begin{align}\label{moment-map-adjoint-cond}
 2h_1(\xi)h_2(\xi)^2=\eta_2'(\xi)-\eta_1(\xi),\qquad\eta_3(\xi)=0.
\end{align}
It is straightforwardly verified that these constraints on $\mu$ also allow for $\iota_{\nu(X)}\mu(X)=0$, and so in this case all conditions are satisfied so that the equivariant topological degree \eqref{equivariant-top-degree} is a~well-defined invariant for this model.

Using the metric $g_N$, we hence find
\begin{align}\label{sharped-moment-adjoint}
 \mu^\sharp(X)=\frac{\eta_1(\xi)}{h_1(\xi)^2}\,(X,x)\,
 \frac{\bdy}{\bdy\xi}+\frac{\eta_2(\xi)}{h_2(\xi)^2}\,\bigl(X-(X,x)x\bigr).
\end{align}
Altogether the metric and moment map depend on four functions $\eta_1$, $\eta_2$, $h_1$, $h_2$. These are subject to one constraint \eqref{moment-map-adjoint-cond}, and can further be fixed by choice of the coordinate $\xi$. For example, we could choose coordinates in which $h_1(\xi)=1$, and take \eqref{moment-map-adjoint-cond} as the definition of $h_2$, written in terms of two functions $\eta_1$, $\eta_2$ satisfying $\eta_2'-\eta_1>0$. So altogether, the metric on $N$ and the moment map $\mu$ combine to give two functional degrees of freedom from which one can determine the other objects in the BPS equations.

There are two natural ways to obtain a compact manifold $N$ (without boundary) from \mbox{$\II=(0,T)$}: we can either identify $(0,x)$ with $(T,x)$ to obtain $N=S^1\times S^2$, or we can collapse the spheres $\{0\}\times S^2$ and $\{T\}\times S^2$ to points, obtaining $N=S^3$. In the first case, we require that $\eta_1$, $\eta_2$, $h_1$, $h_2$ extend to periodic functions of $\xi\in\R$, so that $g_{S^2}$ and $\mu$ extend smoothly to the compactification. In the second case, working in coordinates where $h_1(\xi)=1$, we require that $h_2(\xi), \eta_1'(\xi), \eta_2(\xi)\to 0$ as $\xi$ tends to $0$ or $T$. In either case, $\int_\II \eta_2'\,\d\xi=0$ and so
\begin{equation}\label{eta1 constraint}
\int_\II \eta_1\,\d\xi = -\int_\II 2 h_1h_2^2\,\d\xi=-\operatorname{Vol}(N)/2\pi<0.
\end{equation}
In particular, $\eta_1$ is non-zero.
\begin{Remark}
In the case $h_1(\xi)=1$, $h_2(\xi)=\sin\xi$, $(N,g_N)$ is the round three-sphere $S^3\equiv\SU(2)$. Using the diffeomorphism $U\colon (0,\pi)\times S^2\to\SU(2)$ given by
\begin{align*}
 U(\xi,x)=\cos\xi+\sin\xi\,x,
\end{align*}
we can reconcile the moment map \eqref{moment-map-adjoint} with the choice identified previously in \eqref{moment-map-adjoint-round-sphere-choice} by setting
\begin{align*}
 \eta_1(\xi)=-1,\qquad\eta_2(\xi)=-\tfrac{1}{2}\sin2\xi.
\end{align*}
It is easily checked that these functions satisfy \eqref{moment-map-adjoint-cond}.
\end{Remark}

Having described the geometry of $N$, we now identify some natural solutions of the BPS equations \eqref{BPS2} and \eqref{BPS1}. Recall that, locally, $\phi$ is a function $M\to N$. Using our model of~$N$ as a product of $\II$ and $S^2\subset \su(2)$, this corresponds to a pair of functions $\xi\colon M\to \II$ and $\Phi\colon M\to\su(2)$, such that $\Phi^2=-1$. Globally, we may view $\Phi$ as a section of $\operatorname{End}(E)$, where $E\to M$ is the vector bundle $P\times_{\SU(2)}\C^2$. Then there is a natural splitting $E=L\oplus L^\ast$, where~$L$ and~$L^\ast$ are the eigenbundles associated to the eigenvalues ${\rm i}$ and~$-{\rm i}$ of $\Phi$, respectively.
\subsection{Dirac monopoles}\label{sec:Dirac monopoles}
The simplest solutions that we have identified are where $\Phi$ is covariantly constant, by which we mean that $\d^{A}\Phi=0$. In this case, \smash{$\d^A\phi=\d\xi\frac{\bdy}{\bdy\xi}$} has rank~1, and so by Lemma \ref{lem:nullity}, $\Phi^{\ast A}\Sigma=0$.
We choose a local gauge in which~$\Phi$ is constant.
Since $\Phi$ is parallel, the connection $A$ restricts to a~connection $\ii a$ on $L$, and we can write $A=\Phi\,a$ and $F=\Phi\,\d a$. Then, from \eqref{adjoint-action} and \eqref{sharped-moment-adjoint},
\begin{gather*}
\phi^{\ast A}\nu = [\Phi,F] = 0\qquad\text{and}\\
\phi^{\ast A}\mu^\sharp = \frac{\eta_1(\xi)}{h_1(\xi)^2}\,(F,\Phi)\otimes \frac{\bdy}{\bdy\xi} + \frac{\eta_2(\xi)}{h_2(\xi)^2}\,\bigl(F-(F,\Phi)\Phi\bigr)
= \frac{\eta_1(\xi)}{h_1(\xi)^2}\,\d a\otimes \frac{\bdy}{\bdy\xi}.
\end{gather*}
The BPS equations \eqref{BPS1} and \eqref{BPS2} are simply
\begin{gather*}
\beta \frac{\eta_1(\xi)}{h_1(\xi)^2}\,\d a\otimes \frac{\bdy}{\bdy\xi}=0,\\
\star_{M} \frac{h_1(\xi)^2\d \xi}{3\eta_1(\xi)}\otimes \frac{\bdy}{\bdy\xi} = \d a\otimes \frac{\bdy}{\bdy\xi}.
\end{gather*}
In order to obtain non-trivial solutions, we choose $\beta=0$. Then the first equation is trivially satisfied. In coordinates where $\eta_1(\xi)=-\frac{h_1(\xi)^2}{3}$, the second is the BPS equation $\star_{M}\d \xi=-\d a$ for abelian monopoles. The most well-known solution is the Dirac monopole, for which $a$ is a~connection on a degree 1 line bundle over $\R^3\setminus\{0\}$ and $\xi=\tfrac{1}{2r}$. Similar solutions with point-like singularities can be found on other manifolds $(M,g_M)$ (see, for example, \cite{biswashurtubise2015monopoles}).

\subsection{Solutions from spinor bundles}
\label{sec:spinorial}

For our next family of solutions, we choose a Riemann surface $C$ with local holomorphic coordinate $z$ and metric $g_C=\Omega(z,\ol{z})\,\d z\d\ol{z}$. Let $S\to C$ be a spinor bundle for $C$, which is a~vector bundle of rank 2. Let $\Phi\colon S\to S$ be given by Clifford multiplication with the volume form $\omega_C=\frac{{\rm i}}{2}\Omega\,\d z\wedge \d\ol{z}$. The eigenspaces of $\Phi$ give two subbundles $S^\pm$ such that $S=S^-\oplus S^+$ and $\Phi s=\pm{\rm i} s$ for $s\in S^\pm$.

The Levi-Civita connection on $TC$ induces a natural connection $\nabla^{\rm LC}$ on $S$. There is a second natural connection $\nabla^A$ on $S$ defined as follows
\begin{equation}
\label{baer connection}
\nabla^A_Vs = \nabla^{\rm LC}_Vs + \tfrac{1}{2}V\cdot s\qquad \text{for all } s\in\Gamma(S),\,V\in\Gamma(TC)
\end{equation}
(this connection previously appeared in the classification of riemannian manifolds admitting real Killing spinors \cite{Bar1993}). The curvature of this connection is given in terms of the Gauss curvature~$K$~by
\begin{equation}
\label{spinor curvature}
F^A=\tfrac{1}{2}(1-K)\omega_C\otimes\Phi.
\end{equation}
This identity is easily verified by direct calculation. First, the Levi-Civita connection coincides with the Chern connection, so is given by $\nabla^{\rm LC}\frac{\partial}{\partial z} = (\partial \ln\Omega)\otimes \frac{\partial}{\partial z}$.
Therefore,
\[
\nabla^{\rm LC}\left(\Omega^{-\frac12}\frac{\partial}{\partial z}\right)=\tfrac{1}{2}\bigl(\partial-\bar\partial\bigr)\ln\Omega \otimes\frac{\partial}{\partial z}.
\]
We choose a representation of the Clifford algebra in which
\begin{equation*}
\Omega^{-\frac12}\frac{\partial}{\partial z} \mapsto \begin{pmatrix}0&0\\1&0\end{pmatrix}.
\end{equation*}
Then
\begin{equation*}
\Omega^{-\frac12}\frac{\partial}{\partial \ol{z}} \mapsto -\begin{pmatrix}0&0\\1&0\end{pmatrix}^\dagger = \begin{pmatrix}0&-1\\0&\hphantom{-}0\end{pmatrix}.
\end{equation*}
We note that Clifford multiplication with $\Omega^{\frac12}\,\d z$ is the same as Clifford multiplication with $2\Omega^{-\frac12}\frac{\partial}{\partial \ol{z}}$, because they are related by the musical isomorphism. Therefore,
\begin{equation*}
\omega_C\cdot s = \frac{{\rm i}}{4}\left[\Omega^{\frac12} \d z,\Omega^{\frac12} \d\ol{z}\right]\cdot s = {\rm i}\left[\Omega^{-\frac12}\frac{\partial}{\partial\ol{z}},\Omega^{-\frac12}\frac{\partial}{\partial z}\right]\cdot s
\end{equation*}
for all $s\in S$, and hence
\begin{equation}
\label{spinor Phi}
\Phi = \begin{pmatrix}-{\rm i}&0\\\hphantom{-}0&{\rm i}\end{pmatrix}.
\end{equation}
In this representation, the Levi-Civita connection takes the form
\begin{equation*}
\nabla^{\rm LC}=\d+\begin{pmatrix}-{\rm i} a & 0 \\ \hphantom{-}0 & {\rm i} a\end{pmatrix}.
\end{equation*}
for a 1-form $a$. Then
\begin{equation*}
\left[ \nabla^{\rm LC},\begin{pmatrix}0&0\\1&0\end{pmatrix}\right] = 2{\rm i} a\otimes \begin{pmatrix}0&0\\1&0\end{pmatrix}.
\end{equation*}
Comparing with our earlier calculation, we deduce that ${\rm i} a = \frac{1}{4}(\partial-\bar{\partial})\ln\Omega$. So the connection matrix of $\nabla^A$ in this gauge is
\begin{equation*}
A = \begin{pmatrix} \frac{1}{4}(\bar{\partial}-\partial)\ln\Omega & -\frac{1}{2}\Omega^{\frac12}\d\ol{z} \\ \frac{1}{2}\Omega^{\frac12}\d z & \frac{1}{4}(\partial-\bar{\partial})\ln\Omega\end{pmatrix}.
\end{equation*}
Its curvature is
\begin{equation}
\label{spinor F}
F=\d A+A\wedge A = \begin{pmatrix} \frac{{\rm i}}{2}\rho -\frac{{\rm i}}{2}\omega_C & 0 \\ 0 & -\frac{{\rm i}}{2}\rho +\frac{{\rm i}}{2}\omega_C\end{pmatrix},
\end{equation}
in which $\rho=-{\rm i}\partial\bar{\partial}\ln\Omega=K\omega_C$ is the Ricci form. This completes the proof of equation \eqref{spinor curvature}.

This explicit calculation also shows that
\begin{equation}
\label{spinorial dPhi}
\d^A\Phi = \begin{pmatrix}0&-{\rm i}\Omega^{\frac12}\d\ol{z} \\-{\rm i}\Omega^{\frac12}\d z&0\end{pmatrix},
\end{equation}
from which it follows that
\begin{gather*}
\Phi^{\ast A}g_{S^2} = -\tfrac12\tr\bigl(\d^A\Phi\,\d^A\Phi\bigr)=\Omega\,\d z\d\ol{z}=g_{C},\\
\Phi^{\ast A}\omega_{S^2} = -\tfrac14\tr\bigl(\Phi\,\d^A\Phi\wedge \d^A\Phi\bigr)=\tfrac{{\rm i}}{2}\Omega\,\d z\wedge\d\ol{z}=\omega_{C}.
\end{gather*}
Returning to the BPS equations, we choose $M=\II\times C$ and choose $E=S$ to be the spinor bundle of $C$, pulled back to $\II$ via the obvious projection. The section $\Phi$ and connection $A$ are those defined above (but pulled back to $\II\times C$). The function $\xi\colon \II\times C\to \II$ is just projection onto the first factor.

From equation \eqref{spinor curvature} (or, equivalently, equations \eqref{spinor Phi} and \eqref{spinor F}), we find that
\begin{equation*}
\phi^{\ast A}\nu=\bigl[\Phi,F^A\bigr]=0.
\end{equation*}
We will assume that $\alpha=\beta=0$, then \eqref{BPS2} is solved. Turning to \eqref{BPS1}, we choose the coordinate $\xi$ on $N$ so that $h_1=1$ and from \eqref{adjoint-Sigma} obtain
\begin{equation}
\label{spinorial-Sigma}
\phi^{\ast A}\Sigma = h_2^{\,2}\,\omega_C\frac{\partial}{\partial \xi} + \d\xi\wedge\Phi\,\d^A\Phi.
\end{equation}
On the other hand, from \eqref{spinor curvature}, and the fact that $(\Phi,\Phi)=1$, we obtain
\begin{equation}
\label{spinorial-musharp}
\phi^{\ast A}\mu^\sharp = \tfrac{1}{2}\eta_1(1-K)\omega_C\frac{\partial}{\partial \xi}.
\end{equation}
We make an ansatz for the metric on $M$ of the form
\begin{equation*}
g_M = \d\xi^2 + f(\xi)g_C,
\end{equation*}
so that $\star_{M} \d\xi = f(\xi)\omega_C$ and
\begin{equation*}
\star_{M} \d^A\Phi = \star_{M} \begin{pmatrix} 0&-\frac{{\rm i}}{2}\Omega^{\frac12}\d\ol{z} \\ -\frac{{\rm i}}{2}\Omega^{\frac12}\d z & 0 \end{pmatrix} = \begin{pmatrix} 0&\frac{1}{2}\Omega^{\frac12}\d\ol{z} \\ -\frac{1}{2}\Omega^{\frac12}\d z & 0 \end{pmatrix} = \d\xi\wedge\Phi\, \d^A\Phi.
\end{equation*}
So
\begin{equation*}
\star_{M} \d^A\phi = f(\xi)\omega_C + \d\xi\wedge\Phi\, \d^A\Phi.
\end{equation*}
Combining this with \eqref{spinorial-Sigma} and \eqref{spinorial-musharp}, we see that \eqref{BPS1} (or equivalently \eqref{metric-eqn}) is solved by
\begin{equation}
\label{spinorial-metric}
g_M = \d\xi^2 + \bigl(h_2(\xi)^{2}+\tfrac32\eta_1(\xi)(1-K)\bigr)g_C.
\end{equation}
This may or may not be a riemannian metric, depending on the sign of the coefficient of $g_C$.

Note that $g_M$ is in general \emph{not} the pullback of the metric on $N$. The latter is given by
\begin{equation*}
\phi^{\ast A}g_N = \d\xi^2 + h_2(\xi)^{2}g_C.
\end{equation*}
Since $\eta_1$ is a non-vanishing function, $g_M=\phi^{\ast A} g_N$ only when $K$ is constant and equal to 1, and~$g_C$ is the metric on the sphere of unit radius. In this case $A$ is flat and $\phi$ solves the BPS equation of the ungauged Skyrme model.

We now consider the global topology of our solutions. Suppose that $C$ is compact and that~$N$ is compact. Recalling that we have chosen $h_1(\xi)=1$, \eqref{eta1 constraint} gives
\begin{equation*}
\begin{aligned}
\operatorname{Vol}(M)&=
\int_{\II\times C}\bigl(h_2(\xi)^{2}+\tfrac32\eta_1(\xi)(1-K)\bigr)\,\d\xi\wedge\omega_C \\
&= \int_{\II\times C}\bigl(h_2(\xi)^{2}-3h_2(\xi)^2(1-K)\bigr)\,\d\xi\wedge\omega_C \\
&= \int_{\II}h_2(\xi)^{2}\,\d\xi\bigl(6\pi\chi(C)-2\operatorname{Area}(C)\bigr).
\end{aligned}
\end{equation*}
If \eqref{spinorial-metric} is a riemannian metric, then this integral must be positive. Therefore, this construction gives solutions with $M$ and $N$ compact and riemannian only when the Euler characteristic satisfies $\chi(C)>0$, i.e., when $C$ is a sphere. This means that $M$ and $N$ are homeomorphic to either $S^1\times S^2$ or $S^3$ (or a quotient of one of these spaces).

Interestingly, $M$ and $N$ need not be homeomorphic. To see this, consider the case where $g_N$ is the round metric on $N=S^3$. Recall that $h_2(\xi)=\sin(\xi)$, and we may choose $\eta_1(\xi)=-1$. If the Gauss curvature $K$ of $C=S^2$ is greater than 1, then the metric \eqref{spinorial-metric} extends to $S^1\times S^2$, which is clearly not homeomorphic to $S^3$.

Continuing our discussion of the global topology, we consider the topological degree \eqref{equivariant-top-degree} of~our solutions, assuming that $M$ and $N$ are compact. The volume of $N$ is
\begin{align*}
\operatorname{Vol}(N)=4\pi\int_\II h_2(\xi)^2\,\d\xi.
\end{align*}
We need to compare this with the integral of $\phi^{\ast A}(V_N+\mu)$ over $M$. From \eqref{moment-map-adjoint}, we find that
\begin{equation*}
\begin{aligned}
\phi^{\ast A}V_N &= h_2(\xi)^2\,\d\xi\wedge\omega_C \qquad\text{and}\qquad
\phi^{\ast A}\mu &= \tfrac{1}{2}(1-K)\eta_1(\xi)\,\d\xi\wedge\omega_C.
\end{aligned}
\end{equation*}
Since $\eta_1=\eta_2'-2h_2(\xi)^2$,
\begin{equation*}
\int_M \phi^{\ast A}\mu = \int_\II \bigl(\tfrac12\eta_2'-h_2(\xi)^2\bigr)\,\d\xi\int_C(1-K)\omega_C= \int_\II h_2(\xi)^2 \,\d\xi\left(2\pi\chi(C)-\int_C\omega_C\right),
\end{equation*}
where we made use of the boundary conditions for $\eta_2$ discussed earlier. Therefore,
\begin{align*}
\int_M \phi^{\ast A}(V_N+\mu) = 2\pi\chi(C)\int_\II h_2(\xi)^2\,\d\xi = \frac{\chi(C)}{2}\operatorname{Vol}(N).
\end{align*}
So the topological degree of these solutions is half the Euler characteristic of $C$, or equivalently one minus the genus of $C$. Note that this is positive only when $C=S^2$, which is consistent with the fact that $g_M$ is riemannian only in this case.

\subsection{Twisted spinorial solutions}
\label{sec:twisted}
Now we consider a small variation on the spinorial solutions discussed above. For horizontal vectors $V\in TC$ we let $\nabla^A_V$ be given by the same formula \eqref{baer connection}, but let
\begin{equation*}
\nabla_{\partial_\xi}s = B\Phi\cdot s\qquad\text{for all }s\in\Gamma(S)
\end{equation*}
for some constant $B\in\R$. In other words, $A$ differs from the pullback of the connection \eqref{baer connection} by a 1-form $B\Phi\,\d\xi$. Going through the calculations shows that $\d^A\Phi$ is still given by \eqref{spinorial dPhi}, but $F$ is now given by
\begin{equation*}
F = \tfrac{1}{2}(1-K)\omega_C\otimes\Phi-B\,\d\xi\wedge\d^A\Phi.
\end{equation*}
We find that
\begin{gather*}
\phi^{\ast A}\nu = -2B\,\d\xi\wedge\Phi\,\d^A\Phi, \\
\phi^{\ast A}\mu^\sharp = \frac{\eta_1}{2}(1-K)\omega_C\frac{\partial}{\partial\xi} - \frac{B\eta_2}{h_2^{\,2}}\,\d\xi\wedge\d^A\Phi,
\end{gather*}
while $\phi^{\ast A}\Sigma$ is unchanged from \eqref{spinorial-Sigma}. So the BPS equation \eqref{BPS2} is equivalent to
\begin{gather*}
\alpha h_2(\xi)^2 + \tfrac{1}{2}\beta\eta_1(\xi)(1-K) =0,\\
2\gamma B-\alpha =0, \\
B\beta\eta_2(\xi) =0.
\end{gather*}
We will assume that $h_1(\xi)=1$, $\eta_2(\xi)=0$ and $\gamma\neq0$. Then the third equation is satisfied, and the second equation is solved by $B=\frac{\alpha}{2\gamma}$. If $\alpha\neq0$, then the first equation implies that $h_2(\xi)^2=\beta\eta_1(\xi)(K-1)/2\alpha$. Since $h_2\neq0$ this means that $\beta\neq0$. Note that $\beta$ can be non-zero only if the Gauss curvature $K$ of $C$ is constant (as $\alpha h_2(\xi)^2$ is independent of the coordinate on~$C$).\looseness=-1

Since we have assumed that $\eta_2=0$, $\phi^{\ast A}\mu^\sharp$ is given by the same formula \eqref{spinorial-musharp} as in the previous calculation, and the BPS equation \eqref{BPS1} is once again solved by choosing the metric~\eqref{spinorial-metric}.

\subsection{Spherical solutions}\label{sec:spherical}

For our next family of solutions, we choose $M=\II\times S^2$ and choose $\phi\colon M\to N$ to be the identity map. In particular, $\xi\colon \II\to\II$ is the identity map and $\Phi(x)=x$. Our connection $A$ will be spherically-symmetric and take the following form
\begin{equation}
\label{spherical-ansatz}
A = \tfrac12(f(\xi)-1) x\,\d x.
\end{equation}
Then
\begin{align*}
F = \tfrac14\bigl(f^2-1\bigr)\,\d x\wedge \d x +\tfrac{1}{2}f'\,\d\xi\wedge x\,\d x
\qquad\text{and}\qquad
\d^A\Phi = f\,\d x.
\end{align*}
It follows that
\begin{align*}
\Phi^{\ast A}\omega_{S^2}=-\tfrac14\tr\bigl(\Phi\,\d^A \Phi\wedge\d^A\Phi\bigr) = f^2\omega_{S^2}.
\end{align*}
Thus, from \eqref{adjoint-nu}, \eqref{adjoint-Sigma} and \eqref{moment-map-adjoint},
\begin{gather*}
\phi^{\ast A}\Sigma = \frac{h_2^{\,2}f^2}{h_1}\,\omega_{S^2}\otimes\frac{\bdy}{\bdy\xi}+h_1 f\,\d\xi\wedge x\,\d x, \\
\phi^{\ast A}\mu^\sharp = \frac{\eta_1\big(f^2-1\big)}{2h_1^{\,2}}\omega_{S^2}\otimes\frac{\bdy}{\bdy\xi} + \frac{\eta_2f'}{2h_2^{\,2}}\,\d\xi\wedge x\,\d x, \\
\phi^{\ast A}\nu = \tfrac{1}{2}f'\,\d\xi\wedge \d x.
\end{gather*}
The BPS equation \eqref{BPS2} is then equivalent to
\begin{align}
&2\alpha h_1h_2^{\,2}f^2+\beta\eta_1\big(f^2-1\big)=0, \label{spherical-BPS2a} \\
&2\alpha h_1h_2^{\,2}f+\beta\eta_2f'=0, \label{spherical-BPS2b}\\
&\gamma f' =0.\nonumber
\end{align}
If $\gamma\neq0$, the equations imply that $f$ is constant. Also if $\alpha\neq0$ and $\beta=0$, then $f=0$. These solutions both correspond to a special case of the spinorial solutions. If $\alpha=\gamma=0$ and $\beta\neq0$, they imply that $f^2=1$ and hence that $F=0$. So we consider instead the case where $\gamma=0$ and $\alpha,\beta\neq0$. In this case, the two equations \eqref{spherical-BPS2a} and \eqref{spherical-BPS2b} need to be solved in conjunction with~\eqref{moment-map-adjoint-cond}. These three equations imply that
\begin{gather*}
\frac{ff'}{f^2-1}-\frac{\eta_1}{\eta_2} =0, \\
\frac{\beta f'}{\alpha f} - \frac{\eta_1}{\eta_2} +\frac{\eta_2'}{\eta_2}=0.
\end{gather*}
We choose the coordinate $\xi$ on $N$ such that $\eta_2/\eta_1=\xi$. Then these two equations are solved by
\begin{align*}
f(\xi)= \sqrt{1+C_1\,\xi^2},\qquad
\eta_2(\xi) = C_2\,\xi f(\xi)^{-\frac{\beta}{\alpha}}
\end{align*}
for constants $C_1$ and $C_2$. The full system \eqref{moment-map-adjoint-cond}, \eqref{spherical-BPS2a} and \eqref{spherical-BPS2b} is then solved by
\begin{gather*}
\eta_1(\xi) = C_2\,f(\xi)^{-\frac{\beta}{\alpha}},\nonumber \\
h_1(\xi)h_2(\xi)^2 = -\frac{\beta}{2\alpha}C_1C_2\,\xi^2 f(\xi)^{-\frac{\beta}{\alpha}-2}.
\end{gather*}
This completes the solution. We note that the equations do not constrain the metric functions $h_1$, $h_2$, but only the combination $h_1h_2^{\,2}$. We also make note of one important solution: if $C_1=1$, $C_2=-1$, $\beta=2\alpha$ and $h_1=1/\big(1+\xi^2\big)$, then \eqref{spinorial-metric} and \eqref{moment-map-adjoint} are{\samepage
\begin{equation*}
\mu = -\frac{\d\xi}{1+\xi^2}(X,x)-\frac{\xi}{1+\xi^2}(X,\d x)\qquad\text{and}\qquad g_N = \left(\frac{\d\xi}{1+\xi^2}\right)^2 + \frac{\xi^2}{1+\xi^2}g_{S^2}.
\end{equation*}
Changing coordinates to $\tilde{\xi}=\arctan\xi$ shows that these are the moment map and metric of $S^3$.}

We still need to solve \eqref{BPS1}. Using \eqref{BPS2} (still with $\gamma=0$ and $\alpha\neq0$), this is equivalent to $\star_{M} \d^A\phi = \big(1-3\frac{\alpha}{\beta}\big)\phi^{\ast A}\Sigma$ and takes the form
\begin{gather*}
\star_{M}\,\d\xi = \left(1-\frac{3\alpha}{\beta}\right)\frac{h_2^{\,2}f^2}{h_1}\,\omega_{S^2}, \\
\star_{M}\,\d x = \left(1-\frac{3\alpha}{\beta}\right)h_1 \d\xi\wedge x\,\d x.
\end{gather*}
Assuming that $\frac{3\alpha}{\beta}\neq1$, we can solve these equations by choosing
\begin{align*}
g_M &= \left(1-\frac{3\alpha}{\beta}\right)^2\bigl(h_1(\xi)^2\d\xi^2 + f(\xi)^2h_2(\xi)^2g_{S^2}\bigr).
\end{align*}

For later use, we point out that these spherical solutions can be reformulated in the language of spinors. The map $T_xS^2\to \operatorname{End}(\C^2)$ given by $X\mapsto x\,\d x(X)$ obeys the Clifford algebra relation $(x\,\d x(X))^2= (xX)^2 =-|X|^2\mathrm{Id}$, where we think of $x$ and $X$ as $\su(2)$ matrices satisfying $(x,x)=1$, $(x,X)=0$. Therefore, the trivial bundle $S=S^2\times \C^2$ is isomorphic to the spinor bundle and $x\,\d x(X)s=X\cdot s$ for any $s\in\C^2$. The endomorphism $\Phi$ corresponds to multiplication with the area 2-form $\omega_{S^2}$, and its eigenbundles are therefore the bundles $S^\pm$ of chiral spinors. The connection $\d-\frac12x\,\d x$ makes $\Phi$ parallel so restricts to an abelian connection on the bundles~$S^\pm$. Since its curvature is constant it must be the Levi-Civita connection of $S^2$. So when $f(\xi)=0$ the connection \eqref{spherical-ansatz} coincides with the spinorial connection \eqref{baer connection} for $C=S^2$.

\subsection{Symplectic solutions}
\label{sec:symplectic}
For our final family of solutions, let $C$ be a 2-manifold and let $L\to C$ be a hermitian line bundle. Let $a$ be a $\U(1)$ connection on $L$ with nowhere-vanishing curvature $\d a$. Then $\omega_C:=-2\,\d a$ determines a symplectic structure on $C$.

Let $J_\xi$ be a compatible family of almost complex structures on $C$ that depend on $\xi\in\II$. This means that, for each $\xi\in\II$ and $p\in C$, $J_{\xi,p}\colon T_pC\to T_p C$ satisfies $J_{\xi,p}^2=-1$ and that $g_{C,\xi}(u,v):=\omega_C(u,J_\xi v)$ is a family of riemannian metrics on $C$. Then $J_\xi$ determines a family of subbundles $\Lambda^{1,0}_\xi C\subset\C\otimes\Lambda^1C$. Let $w_\xi$ be a family of sections of $\Lambda^{1,0}_\xi C\otimes L^2$, normalised so that $2{\rm i}w_\xi\wedge\bar{w}_\xi=\omega_C$. Here $\bar{w}_\xi$ denotes the dual of $w$ with respect to the hermitian metric on $L^2$ and is a section of $\Lambda^{0,1}_\xi\otimes L^{-2}$.

We choose $M=\II\times C$, $\xi\colon \II\to \II$ to be the identify function, $E=L^\ast\oplus L$, and
\begin{equation*}
A=\begin{pmatrix}-{\rm i}a & -\bar{w}_\xi \\ \hphantom{-} w_\xi & \hphantom{-}{\rm i} a \end{pmatrix},\qquad \Phi=\begin{pmatrix}-{\rm i} & 0 \\ \hphantom{-}0 & {\rm i} \end{pmatrix}.
\end{equation*}
We assume that $\alpha=\gamma=\eta_2=0$ and that $\beta\neq0$. Then the BPS equation \eqref{BPS2} is equivalent to
\begin{equation*}
0 = \beta\,\phi^{\ast A}\mu^\sharp = \beta(\d a +{\rm i}w_\xi\wedge\bar{w}_\xi)\otimes\frac{\partial}{\partial\xi}.
\end{equation*}
This is automatically satisfied by our choice of $a$ and $w_\xi$ by construction. Since we now have that $\phi^{\ast A}\mu^\sharp=0$, equation \eqref{BPS1} is equivalent to $\star_M\phi^{\ast A}=\phi^{\ast A}\star_N$ and is solved by $g_M=\phi^{\ast A} g_N$. An explicit calculation shows that
\begin{equation*}
g_M = h_1(\xi)^2\d\xi^2 + 4h_2(\xi)^2 w_\xi\bar{w}_\xi = h_1(\xi)^2\d\xi^2 + h_2(\xi)^2 g_{C,\xi}.
\end{equation*}
This is a riemannian metric only if $w_\xi$ is non-vanishing. This means that $L^2$ is isomorphic to~$T^{1,0}_\xi C$ and that $2a$ is a connection on $T^{1,0}C$. If $C$ is compact, it further means that
\begin{equation*}
\chi(C) = c_1\big(T^{1,0}C\big) = \frac{{\rm i}}{2\pi}\int_C 2{\rm i}\,\d a = +\frac{1}{2\pi}\int_C \omega_C > 0,
\end{equation*}
so $C$ must be a 2-sphere.

\subsection{Classification of BPS solutions}

Having presented several solutions of \eqref{BPS1} and \eqref{BPS2}, we are now ready to state and prove a~theorem that classifies solutions.
\begin{Theorem}\label{thm:classification-S2}
Let $G=\SU(2)$ and let $N=\II\times \SU(2)/\U(1)$, with metric and moment map given in \eqref{metric-N-S2} and \eqref{moment-map-adjoint}. Suppose that $\eta_1(\xi)$ is non-vanishing and that $\alpha$, $\beta$, $\gamma$ are not all zero. Then any solution of the BPS equations \eqref{BPS1} and~\eqref{BPS2} is one of the solutions described above, i.e., either a Dirac monopole, a spinorial solution, a twisted spinorial solution, a spherical solution, or a symplectic solution.
\end{Theorem}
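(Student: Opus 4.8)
The plan is an exhaustive case analysis organised by the rank of $\d^A\phi$, as prepared by Proposition~\ref{prop:nec-for-BPS1}, but refined using the product structure $TN=\R\,\bdy_\xi\oplus TS^2$. Write $\phi$ locally as the pair $(\xi,\Phi)$ with $\Phi^2=-1$, and work in a local frame for $E=L\oplus L^\ast$ adapted to the eigenbundle splitting of $\Phi$, so that $\Phi=\diag(-\ii,\ii)$ is constant. Then, using \eqref{adjoint-Sigma}, \eqref{sharped-moment-adjoint}, \eqref{adjoint-action} and \eqref{spinorial dPhi}, each equivariant pullback splits into a ``base'' part valued in $\bdy_\xi$, built from the diagonal curvature $(F,\Phi):=-\tfrac12\tr(F\Phi)$, and a ``fibre'' part valued in $T_\Phi S^2$, built from $\d^A\Phi$ and the off-diagonal curvature $F_\perp:=F-(F,\Phi)\Phi$: explicitly $\phi^{\ast A}\Sigma=\tfrac{h_2^2}{h_1}\,\Phi^{\ast A}\omega_{S^2}\otimes\bdy_\xi+h_1\,\d\xi\wedge\Phi\,\d^A\Phi$, $\phi^{\ast A}\mu^\sharp=\tfrac{\eta_1}{h_1^2}(F,\Phi)\otimes\bdy_\xi+\tfrac{\eta_2}{h_2^2}F_\perp$, and $\phi^{\ast A}\nu=[\Phi,F]=[\Phi,F_\perp]$, the last vanishing exactly when $F_\perp=0$. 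Since $\d^A\Phi$ anticommutes with $\Phi$ it takes values in the rank-two bundle $T_\Phi S^2$, so $\rk\d^A\Phi\in\{0,1,2\}$; moreover $\Phi^{\ast A}\omega_{S^2}$ is a nowhere-zero $2$-form precisely when $\rk\d^A\Phi=2$. A useful tool throughout is that a $T_\Phi S^2$-valued $2$-form that is simultaneously a multiple of $\star_M\sigma\otimes Y$ (for a real $1$-form $\sigma$ and $Y\in T_\Phi S^2$) and a multiple of $(\,\cdot\,)\otimes\Phi Y$ must vanish, because $Y$ and $\Phi Y$ are orthogonal and $\star_M\sigma\wedge\sigma$ is a positive multiple of the volume form when $\sigma\neq0$.

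The rank-$0$ case $\d^A\Phi=0$ ($\Phi$ covariantly constant) is the Dirac family: $A$ restricts to an abelian connection on $L$, $\phi^{\ast A}\Sigma=\phi^{\ast A}\nu=0$, and \eqref{BPS2} reduces to $\beta\tfrac{\eta_1}{h_1^2}(F,\Phi)\otimes\bdy_\xi=0$; since $\eta_1$ is nowhere zero, a solution with $F\neq0$ forces $\beta=0$, and \eqref{BPS1} becomes the abelian monopole equation of Section~\ref{sec:Dirac monopoles}. For $\rk\d^A\Phi=1$, write $\d^A\Phi=\sigma\otimes Y$; then $\Phi^{\ast A}\omega_{S^2}=0$, one uses \eqref{BPS2} to express $F_\perp$ in terms of $\d\xi\wedge\sigma$, and the $TS^2$-component of \eqref{BPS1} then forces $\star_M\sigma\otimes Y$ to be $\d\xi\wedge\sigma$ times a combination of $Y$ and $\Phi Y$; the tool above (splitting also on whether $\d\xi$ is proportional to $\sigma$, and using \eqref{bianchi} in the residual case) gives $\sigma=0$, a contradiction. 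A similar comparison of the two sides of \eqref{BPS1} rules out the remaining way $\d^A\phi$ could have rank~$2$, namely $\rk\d^A\Phi=2$ with $\d\xi$ in the span of $\d^A\Phi$. So from here $\rk\d^A\Phi=2$ and $\d^A\phi$ has rank~$3$.

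Now split on $F_\perp$. If $F_\perp=0$ (equivalently $\phi^{\ast A}\nu=0$), the only fibre part of \eqref{BPS2} is $\alpha h_1\,\d\xi\wedge\Phi\,\d^A\Phi$, which is nonzero by the previous paragraph, so $\alpha=0$; then the base part of \eqref{BPS2} gives $\beta\tfrac{\eta_1}{h_1^2}(F,\Phi)=0$, hence $\beta=0$ whenever $F\neq0$. When $F_\perp=0$ and $\rk\d^A\Phi=2$, reading off a compatible conformal structure on the $S^2$-directions from $\d^A\Phi$ identifies $A$, up to gauge, with the B\"ar connection \eqref{baer connection} on a spinor bundle of a surface $C$, and solving \eqref{BPS1} then yields the metric \eqref{spinorial-metric}: this is the spinorial family. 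If instead $F_\perp\neq0$, then $\phi^{\ast A}\nu\neq0$, and since $(\alpha,\beta,\gamma)\neq(0,0,0)$ the system is genuinely non-abelian. Here I would use the base part of \eqref{BPS2} and $\eta_1\neq0$ to solve for $(F,\Phi)$ in terms of $\Phi^{\ast A}\omega_{S^2}$ (hence of the Gauss curvature of $C$), substitute this and the fibre part of \eqref{BPS2} into \eqref{bianchi} and the moment-map constraint \eqref{moment-map-adjoint-cond}, and integrate the resulting ordinary differential equations in $\xi$. A short argument then shows: $\gamma\neq0$ forces $\alpha\neq0$ (else $F_\perp=0$), hence $\beta\neq0$, $\eta_2=0$, and --- since $\alpha h_2^2$ is independent of the surface coordinate --- constant Gauss curvature, giving the twisted spinorial family; $\gamma=0$ forces $\beta\neq0$ (else $\phi^{\ast A}\Sigma=0$, impossible when $\rk\d^A\Phi=2$), and then $\alpha\neq0$ gives the spherical family of Section~\ref{sec:spherical}, while $\alpha=0$ forces $(F,\Phi)=0$ and $\eta_2=0$, giving the symplectic family with $g_M=\phi^{\ast A}g_N$. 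In every full-rank case $g_M$ itself is recovered from \eqref{metric-eqn} as in the proof of Lemma~\ref{lem:star-constraint}, and one verifies it coincides with the stated metric.

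I expect the main obstacle to be the last branch, $F_\perp\neq0$ with $\rk\d^A\Phi=2$: there the BPS equations no longer rigidify $\phi$, and one must combine both components of \eqref{BPS2} with \eqref{bianchi} and \eqref{moment-map-adjoint-cond} to extract the correct ODE system in $\xi$ and to check that its solutions are \emph{exactly} the twisted spinorial and spherical ones --- in particular that no extra functional freedom survives and that the Gauss curvature is forced to be constant in the twisted case. A secondary subtlety is passing from local to global: each family in Sections~\ref{sec:Dirac monopoles}--\ref{sec:symplectic} was presented with particular coordinate and gauge choices and a choice of compactification of $\II\times S^2$, so the final step is to observe that any solution with the local structure just obtained coincides, after those choices, with a member of the corresponding family. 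Configurations with $F=0$ are limiting cases of the spinorial and symplectic families (e.g.\ $K\equiv1$ on a round $C=S^2$) and need no separate treatment.
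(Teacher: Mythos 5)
Your overall architecture matches the paper's: gauge-fix so that $\Phi=\diag(-\ii,\ii)$ is constant, decompose every pullback into a $\bdy_\xi$-component built from the diagonal curvature and a $T_\Phi S^2$-component built from $\d^A\Phi$ and the off-diagonal curvature, and then run a case analysis whose branches land on the five families. Your branching variable ($F_\perp=0$ versus $F_\perp\neq 0$, then the vanishing of $\gamma$ and $\alpha$) is a reparametrisation of the paper's (whether $\beta\eta_2$ or $\gamma$ is nonzero, then $\alpha=0$, $\gamma=0$ or $\eta_2=0$), and the non-full-rank reduction to Dirac monopoles, while sketchier than the paper's argument via the identity $2\star_M|w|^2=\tfrac{3\ii\eta_2}{2h_2^2}\d(w\wedge\bar w)+2\ii h_1\d\xi\wedge w\wedge\bar w$, is plausibly completable.

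There is, however, a genuine gap exactly where you predicted the difficulty would lie. In the full-rank branch with $F_\perp\neq0$ and $\gamma\neq0$ you assert ``hence $\beta\neq0$, $\eta_2=0$'', and propose to extract this from the Bianchi identity \eqref{bianchi} and the moment-map constraint \eqref{moment-map-adjoint-cond}. Those tools do not suffice. Writing the fibre part of \eqref{BPS2} as $\d^aw=\lambda(\xi)\,\d\xi\wedge w$ with $\lambda=-2\alpha h_1h_2^2/(\beta\eta_2+2\ii\gamma h_2^2)$, the integrability condition $\d^a\d^aw=2\ii\,\d a\wedge w$ yields only $\iota_{\partial_\xi}\d a=0$, and \eqref{moment-map-adjoint-cond} is a relation among the data on $N$ that says nothing about $\eta_2$ vanishing; neither rules out $\alpha,\gamma,\eta_2$ all nonzero. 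What actually kills that case is the requirement that the operator $\star_M$ determined by \eqref{BPS1} be the Hodge star of a genuine riemannian metric: substituting $\d^aw=\lambda\,\d\xi\wedge w$ into the fibre part of \eqref{BPS1} gives $\star_Mw=\ii\bigl(h_1+\tfrac{3\eta_2}{2h_2^2}\lambda\bigr)\d\xi\wedge w$, and the symmetry $\bar w\wedge\star_Mw=w\wedge\star_M\bar w$ forces $h_1+\tfrac{3\eta_2}{2h_2^2}\lambda$ to be real, i.e., $\eta_2\operatorname{Im}\lambda\propto\alpha\gamma\eta_2=0$. This is precisely the trace-free condition of Proposition \ref{prop:nec-for-BPS1}\,(3) (via Lemma \ref{lem:star-constraint}), which the paper deploys as equation \eqref{adjoint-tracefree} to obtain $\alpha\gamma\eta_2=0$ and $\eta_1\iota_{\partial_\xi}\d a=0$ in one stroke. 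You cite Proposition \ref{prop:nec-for-BPS1} in your preamble and invoke the metric recovery only as a final verification, but it must be applied \emph{before} the branch split: without it the trichotomy $\alpha=0$ / $\gamma=0$ / $\eta_2=0$ is not established, and a putative sixth family with $\alpha$, $\gamma$, $\eta_2$ all nonzero is not excluded.
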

Before proving the theorem, we comment on the hypotheses. The hypothesis that $\eta_1$ is non-vanishing is motivated by the observation made below \eqref{eta1 constraint} that, if $N$ is compact, $\eta_1$ is non-zero. It is moreover reasonable to assume that $\alpha$, $\beta$, $\gamma$ are not all zero, as otherwise the BPS equation~\eqref{BPS2} is trivial.
\begin{proof} We begin by fixing a gauge where $\Phi$ is constant. Explicitly, we choose
\begin{equation}
\label{adjoint-Phi-A}
\Phi=\begin{pmatrix} -{\rm i} & 0 \\ \hphantom{-}0 & {\rm i} \end{pmatrix},\qquad
A = \begin{pmatrix} -{\rm i}a & -\bar{w} \\ \hphantom{-}w & \hphantom{-}{\rm i}a \end{pmatrix}.
\end{equation}
In this gauge, we obtain
\begin{equation}\label{adjoint-dPhi-F}
\d^A\Phi = \begin{pmatrix}\hphantom{-}0 & -2{\rm i}\bar{w} \\ -2{\rm i}w & \hphantom{-}0 \end{pmatrix}, \qquad
F = \begin{pmatrix} w \wedge \bar{w}-{\rm i}\d a & -\d^a\bar{w} \\ \d^a w& {\rm i}\d a - w\wedge\bar{w} \end{pmatrix},
\end{equation}
in which we have introduced $\d^a w = \d w + 2{\rm i}a\wedge w$ and $\d^a \bar{w} = \d \bar{w} - 2{\rm i}a\wedge \bar{w}$. The BPS equations~\eqref{BPS1} and \eqref{BPS2} take the form
\begin{align}
&\alpha \frac{h_2^{\,2}}{h_1}2{\rm i}w\wedge\bar{w} + \beta\frac{\eta_1}{h_1^{\,2}}(\d a+{\rm i}w\wedge\bar{w}) = 0, \label{adjoint-BPS1a}\\
&2\alpha h_1\d\xi\wedge w + \left(\beta\frac{\eta_2}{h_2^{\,2}}+2{\rm i}\gamma\right)\d^a w =0, \label{adjoint-BPS1b}\\
&\frac{h_2^{\,2}}{h_1}2{\rm i}w\wedge\bar{w} + 3\frac{\eta_1}{h_1^{\,2}}(\d a+{\rm i}w\wedge\bar{w}) = \star_{M}\d\xi,\nonumber \\ 
&2 h_1\d\xi\wedge w + 3\frac{\eta_2}{h_2^{\,2}}\,\d^a w =-2{\rm i}\star_{M}w.\label{adjoint-BPS2b}
\end{align}
We note that \eqref{adjoint-BPS2b} implies that{\samepage
\begin{equation}\label{adjoint-BPS3}
2\star_{M}|w|^2 = w\wedge\star_{M}\bar{w}+\bar{w}\wedge\star_{M}w = \tfrac{3{\rm i}\eta_2}{2h_2^{\,2}}\,\d(w \wedge\bar{w})+2{\rm i}h_1\,\d\xi\wedge w\wedge\bar{w}.
\end{equation}
We will solve the equations in two separate cases, according to the rank of $\d^A\phi$.}

First we consider the case where the rank of $\d^A\phi$ is less than 3. We will show that in this case all solutions have $w=0$. This means that the solutions are Dirac monopoles, as discussed in Section \ref{sec:Dirac monopoles}.

Since $\d^A\phi$ is not full-rank, $\phi^{\ast A}V_N=0$. This means that \begin{equation}\label{adjoint-notfullrank}
\d\xi\wedge w\wedge\bar{w}=0.
\end{equation}
If $\beta\eta_2\neq0$ or $\gamma\neq0$, then \eqref{adjoint-BPS1b} implies that
\begin{equation}\label{adjoint-dw-solution}
\d^a w = -\frac{2\alpha h_1h_2^{\,2}}{\beta \eta_2 + 2{\rm i}\gamma h_2^{\,2}}\,\d\xi\wedge w.
\end{equation}
Substituting this into \eqref{adjoint-BPS2b} gives
\begin{equation*}
\star_{M}w = {\rm i}h_1\left(1-\frac{3\alpha \eta_2}{\beta \eta_2 + 2{\rm i}\gamma h_2^{\,2}}\right)\d\xi\wedge w.
\end{equation*}
This, together with \eqref{adjoint-notfullrank}, implies that $\bar{w}\wedge\star_{M}w=0$, and hence that $|w|^2=0$.

On the other hand, if $\gamma=0$ and $\beta\eta_2=0$ then either $\beta=0$ or $\eta_2=0$. In the former case, we must have $\alpha\neq0$, and it follows from \eqref{adjoint-BPS1a} that $w\wedge\bar{w}=0$ and then from \eqref{adjoint-BPS3} that $|w|^2=0$. In the latter case, if $\eta_2=0$ it follows immediately from \eqref{adjoint-BPS3} and \eqref{adjoint-notfullrank} that $|w|^2=0$.

Now we consider the case where $\d^A\phi$ is full rank. Then $\d\xi$, $w$, $\bar{w}$ are a frame for the complexified cotangent bundle of $M$. Let $V$, $W$, $\bar{W}$ be the dual frame for $TM$. Then $\iota_V\d\xi=\iota_W w=\iota_{\bar{W}}\bar{w}=1$ and $\iota_W\d\xi=\iota_Vw=\iota_W\bar{w}=0$. Since $V$ is nowhere-vanishing it generates a flow on $M$. Let $C$ be the quotient of $M$ by this flow, which is homeomorphic to a level set of $\xi$ and hence is a 2-manifold. There is a natural projection $M\to C$. Combining this with the map $\xi\colon M\to\II$, we obtain a map
\begin{equation*}
M\lto \II\times C,
\end{equation*}
which is in fact a diffeomorphism. Under this diffeomorphism, $V$ is identified with the tangent vector $\partial/\partial\xi$ to $\II$, and we will write $V=\partial/\partial\xi$ from now on.

Consider the BPS equation \eqref{BPS2}. Each term is a section of $\Lambda^2M\otimes\phi^\ast TN$, or equivalently, a~map from $\Lambda^1N$ to $\Lambda^2M$. Since $\d^A\phi$ has full rank, we can invert $\phi^{\ast A}$ to obtain \smash{$\big(\phi^{\ast A}\big)^{-1}\colon \Lambda^1M\to\Lambda^1N$}. As explained in Section \ref{sec:nec-cond-BPS1}, composing each term in \eqref{BPS2} with \smash{$\big(\phi^{\ast A}\big)^{-1}$} results in the equation \eqref{metric-eqn}, each of whose terms is a map $\Lambda^1M\to\Lambda^2M$. As explained in Proposition \ref{prop:nec-for-BPS1}, the term involving the moment map must be trace-free, and we will see that this fact constrains the solutions.

First we write the constraint explicitly. From \eqref{moment-map-adjoint} and \eqref{adjoint-dPhi-F}, we obtain
\begin{equation*}
\phi^{\ast A}\mu^\sharp = \frac{\eta_1}{h_1^{\,2}}(\d a+{\rm i}w\wedge\bar{w})\otimes\frac{\partial}{\partial\xi}+\frac{\eta_2}{h_2^{\,2}}\begin{pmatrix}0 & -\d^a\bar{w} \\ \d^a w & 0 \end{pmatrix}.
\end{equation*}
In this equation, we are identifying the tangent space to $S^2$ at $\Phi$ with the space of matrices orthogonal to $\Phi$. From \eqref{adjoint-dPhi-F},
\begin{equation*}
\d^A\phi(W)=\begin{pmatrix}\hphantom{-} 0&0\\-2{\rm i}&0\end{pmatrix} \qquad\text{and}\qquad
\d^A\phi(\bar{W})=\begin{pmatrix}0&-2{\rm i}\\0&\hphantom{-}0\end{pmatrix},
\end{equation*}
while $\d^A\phi(\partial/\partial\xi)=\partial/\partial\xi$. Therefore,
\begin{equation*}
\big(\phi^{\ast A}\mu^\sharp\big)\circ \big(\phi^{\ast A}\big)^{-1} = \frac{\eta_1}{h_1^{\,2}}(\d a+{\rm i}w\wedge\bar{w})\otimes\frac{\partial}{\partial\xi}+\frac{{\rm i}\eta_2}{2h_2^{\,2}}\big(\d^a w\otimes W -\d^a\bar{w}\otimes\bar{W}\big).
\end{equation*}
The trace-free constraint from item (3) of Proposition \ref{prop:nec-for-BPS1} is
\begin{equation}
\label{adjoint-tracefree}
0 = \frac{\eta_1}{h_1^{\,2}}\iota_{\partial_\xi}\d a + \frac{{\rm i}\eta_2}{2h_2^{\,2}}\big(\iota_W\d^a w -\iota_{\ol{W}}\d^a\ol{w}\big),
\end{equation}
where we have made use of the fact that $\iota_{\partial\xi}w=\iota_{\partial\xi}\bar{w}=0$.

We now assume that at least one of $\beta\eta_2$ and $\gamma$ is non-zero (we will return to the case $\beta\eta_2=\gamma=0$ at the end of the proof). Then \eqref{adjoint-BPS1b} implies that $\d^aw$ is given by \eqref{adjoint-dw-solution}. Substituting this into the constraint \eqref{adjoint-tracefree} gives
\begin{equation*}
0 = \frac{\eta_1}{h_1^{\,2}}\iota_{\partial_\xi}\d a + \frac{4\alpha\gamma\eta_2 h_1h_2^{\,2}}{(\beta\eta_2)^2+(2\gamma h_2^{\,2})^2}\d\xi.
\end{equation*}
Contracting this with $\partial/\partial\xi$ tells us that $\alpha\gamma\eta_2=0$, and in turn that $\eta_1\iota_{\partial_\xi}\d a=0$. Then we are in one of the following three situations: either $\alpha=0$, $\gamma=0$, or $\eta_2=0$. We consider these in turn.

If $\alpha=0$, then $\d^a w=0$ from \eqref{adjoint-dw-solution}. Recall that $\iota_{\partial_\xi}w=0$. We may choose a gauge in which $\iota_{\partial_\xi}a=0$ also. Moreover,
\begin{equation*}
\begin{aligned}
\mathcal{L}_{\partial_\xi}a &= (\iota_{\partial_\xi}\d + \d\iota_{\partial_\xi})a=0 \qquad\text{and}\\
\mathcal{L}_{\partial_\xi}w &= (\iota_{\partial_\xi}\d + \d\iota_{\partial_\xi})w=\iota_{\partial_\xi}(\d^aw-2{\rm i}a\wedge w)=0.
\end{aligned}
\end{equation*}
Then $a$ and $w$ are both 1-forms on $C$ that have been pulled back to $M$, because their Lie derivatives and contractions with $\frac{\partial}{\partial \xi}$ vanish.

Consider now the metric $g_C=\Phi^{\ast A}g_{S^2}=4w\bar{w}$ on $C$. An orthonormal frame is given by $e^1=w+\bar{w}$, $e^2={\rm i}(\bar{w}-w)$. Then $\d^aw=0$ implies that
\begin{equation}
\label{torsion-free}
\d e^1 -2a\wedge e^2=0,\qquad \d e^2 +2a\wedge e^1=0.
\end{equation}
Equation \eqref{torsion-free} says that the metric-compatible connection $\nabla e^1:=2a\otimes e^2$, $\nabla e^2:=-2a\otimes e^1$ is torsion-free, so it must be the Levi-Civita connection of $g_C$. Moreover,
\begin{equation*}
TC\lto \operatorname{End}(E), \qquad X\mapsto \frac12\begin{pmatrix}0&-\iota_X\bar{w}\\ \iota_X w&0\end{pmatrix}
\end{equation*}
satisfies the Clifford algebra relation $X\cdot X=-g_C(X,X){\rm Id}$, so $(L^\ast\oplus L)$ is isomorphic to the spinor bundle of $C$. Then the ansatz \eqref{adjoint-Phi-A} is precisely the spinorial ansatz considered in Section~\ref{sec:spinorial}.

If $\gamma=0$ then $\d^aw$ is given by \eqref{adjoint-dw-solution}
\begin{equation}\label{adjoint-dw-solution-simplified}
\d^a w = -\frac{2\alpha h_1h_2^{\,2}}{\beta \eta_2}\d\xi\wedge w.
\end{equation}
Again, we choose a gauge in which $\iota_{\partial_\xi}a=0$ and find that $\mathcal{L}_{\partial_\xi}a=0$. On the other hand, \eqref{adjoint-dw-solution-simplified}~implies that
\begin{equation*}
\mathcal{L}_{\partial_\xi}w = (\iota_{\partial_\xi}\d + \d\iota_{\partial_\xi})w=-\frac{2\alpha h_1h_2^{\,2}}{\beta\eta_2}w,
\end{equation*}
where we used $\iota_{\partial_\xi}a=0$. This equation implies that $w=f(\xi)w_0$, where $w_0$ satisfies $\mathcal{L}_{\partial_\xi}w_0=\iota_{\partial_\xi}w_0=0$ and $f$ satisfies the differential equation \eqref{spherical-BPS2b}.

From \eqref{adjoint-BPS1a}, we obtain
\begin{equation*}
\d a = -{\rm i}\left(1+\frac{2\alpha h_1 h_2^{\,2}}{\beta\eta_1}\right)f^2\,w_0\wedge\bar{w}_0.
\end{equation*}
Since $\mathcal{L}_{\partial_\xi}\d a = \mathcal{L}_{\partial_\xi}w_0\wedge\bar{w}_0=0$, the real function $\big(1+\frac{2\alpha h_1 h_2^{\,2}}{\beta\eta_1}\big)f^2$ must be constant. Without loss of generality, we can choose $f,w_0$ so that this constant is 1. Then $\d^a w_0=0$, so, reasoning as above, $a$ must be the Levi-Civita connection for the metric $g_C=4w_0\bar{w}_0$. Since $\d a=-{\rm i}w_0\wedge\bar{w}_0$, this metric has scalar curvature $K=1$, i.e., it is the round metric on $S^2$. The map
\begin{equation*}
TM\lto \operatorname{End}(E), \qquad V\mapsto \frac12\begin{pmatrix}0&-\bar{w}_0(V)\\w_0(V)&0\end{pmatrix}
\end{equation*}
satisfies the Clifford algebra relation, so $E$ is the spinor bundle of $S^2$ and we are in the situation described at the end of Section \ref{sec:spherical}.

If $\eta_2=0$, then $\d^aw$ is given by \eqref{adjoint-dw-solution}
\begin{equation}\label{adjoint-dw-solution-simplified2}
\d^a w = {\rm i}\frac{\alpha h_1}{\gamma}\d\xi\wedge w.
\end{equation}
We choose the coordinate $\xi$ so that $h_1(\xi)=1$ and choose a gauge in which $\iota_{\partial_\xi}a=\alpha /2\gamma$. Then once again $\mathcal{L}_{\partial_\xi}a=0$, while
\begin{equation*}
\mathcal{L}_{\partial_\xi}w = (\d\iota_{\partial\xi}+\iota_{\partial\xi}\d)w = \iota_{\partial_\xi}(\d^aw - 2{\rm i}a\wedge w) = 0.
\end{equation*}
The connection $a_0=a-(\alpha/2\gamma)\d\xi$ satisfies $\iota_{\partial_\xi} a_0=0$, $\mathcal{L}_{\partial_\xi}a_0=0$ and $\d^{a_0}w=0$ so, reasoning as above, it must be the Levi-Civita connection for the metric $g_C=4w\bar{w}$. We are now in the situation described in Section \ref{sec:twisted}.

So, the solutions for which $\d^A\phi$ has full rank and at least one of $\beta\eta_2,\gamma$ is non-zero are those described in Sections \ref{sec:spinorial}, \ref{sec:twisted} and \ref{sec:spherical}. We now consider the case where $\beta\eta_2=\gamma=0$ and $\d^A\phi$ has full rank. In this case, \eqref{adjoint-BPS1b} implies that $\alpha=0$, as $\d\xi\wedge w\neq0$ by assumption. Then $\beta\neq0$, since we are assuming that $\alpha$, $\beta$, $\gamma$ are not all zero, and therefore $\eta_2=0$.

Since $\eta_1\neq0$, \eqref{adjoint-BPS1a} implies that $\d a = -{\rm i}w\wedge\bar{w}$. This in turn implies that $\iota_{\partial_\xi}\d a = 0$. Working in a gauge where $\iota_{\partial_\xi}a=0$, it follows that $\LL_{\partial_\xi}a=0$. Therefore, $a$ is the pullback of a connection on a line bundle over $C$. Its curvature is non-vanishing, because $\d^A\phi$ has full rank and therefore ${\rm i}\d\xi\wedge\d a=\d\xi\wedge w\wedge\bar{w}\neq 0$.

A short calculation shows that $2{\rm i}w\wedge\bar{w}=\Phi^{\ast A}\omega_{S^2}=:\omega_C$ and that $\Phi^{\ast A}g_{S^2}=4w\bar{w}=:g_C$. It follows that $w$ is a (1,0)-form with respect to the complex structure defined by the metric $g_C$ on $C$. Note that this metric and its complex structure could depend on $\xi$, because $\LL_{\partial_\xi}w$ may not vanish. However, the area form satisfies $\LL_{\partial_\xi}\omega_C = -2\LL_{\partial_\xi}\d a = 0$. We are now in precisely the situation considered in Section \ref{sec:symplectic}. Therefore, full-rank solutions in the case $\beta\eta_2=\gamma=0$ are the symplectic solutions of that section.
\end{proof}
\section[Principal orbit S\^{}3]{Principal orbit $\boldsymbol{S^3}$}\label{sec:princ-orbit-S3}

In this short final section, we consider the case where $G=\SU(2)$ acts on $N$ with principal orbit~$S^3$. In other words, we consider $N=\SU(2)$ acting on itself by left-multiplication. Without going into any of the details, the basic summary of this section is that the BPS equations~\eqref{BPS1} and~\eqref{BPS2} are not relevant at all in this setting, as the quantity \eqref{equivariant-top-degree} is not a well-defined topological charge. From a high-level perspective, this makes sense since the equivariant cohomology $H^3_{\SU(2)}\big(S^3,\R\big)=0$ is trivial. From a more explicit perspective, the issue that arises here is that there is no moment map $\mu$ which solves the constraint \eqref{moment-map-constraint}. So for this reason, we do not look for solutions of the equations \eqref{BPS1} and \eqref{BPS2} in this setting.

We now briefly discuss why there are no $\mu$ satisfying \eqref{moment-map-constraint}. The condition that $\SU(2)$ acts on $(\SU(2),g_{\SU(2)})$ by isometries forces $g_{\SU(2)}$ to be left-invariant. The volume form will thus be of the form
\begin{align*}
 V_{\SU(2)}=-\frac{K}{12}\tr(\theta_L\wedge\theta_L\wedge\theta_L)=-\frac{K}{12}\tr(\theta_R\wedge\theta_R\wedge\theta_R)
\end{align*}
for some $K>0$, where $\theta_{L,R}$ are the left- and right-invariant Maurer--Cartan forms discussed in Section \ref{sec:energy-gauged}. From \eqref{contraction-Maurer}, the vector field $\nu_L$ corresponding to the left-action of $\SU(2)$ on itself contracts with $V_{\SU(2)}$ to give
\begin{align*}
 \iota_{\nu_L(X)}V_{\SU(2)}=\tfrac{\begingroup\color{black}K\endgroup}{4}\tr(X\,\theta_R\wedge\theta_R)=\d\tfrac{\begingroup\color{black}K\endgroup}{4}\tr(X\,\theta_R),
\end{align*}
where in the last line we used the structure equation \eqref{structure-eqns-SU2}. It is straightforward to see that the only left-equivariant choice for $\mu$ satisfying $\d\mu(X)=\iota_{\nu_L(X)}V_{\SU(2)}$ is hence
\begin{align*}
 \mu(X)=\tfrac{\begingroup\color{black}K\endgroup}{4}\tr(X\,\theta_R).
\end{align*}
On the other hand, using \eqref{contraction-Maurer}, this satisfies
\begin{align*}
 \iota_{\nu_L(X)}\mu(X)=-\tfrac{\begingroup\color{black}K\endgroup}{4}\tr(X^2)=\tfrac{\begingroup\color{black}K\endgroup}{2}|X|^2\neq0,
\end{align*}
and so there are no $\mu$ satisfying both \eqref{moment-map-def} and~\eqref{moment-map-constraint}.
Although for this case we have shown that the topological charge cannot be realised as an~equivariant topological degree, it is worth remarking that an alternative topological description may be used by considering the difference between the ordinary topological degree \eqref{top-degree} and a~Chern--Simons functional; this is the convention taken by those studying \textit{electroweak skyrmions} (see, e.g., \cite{criadoKhozeSpannowsky2021emergence,dHokerFarhi1984decoupling}). Our focus has been on the BPS equations \eqref{BPS1} and~\eqref{BPS2} relevant for theories admitting topological charge given by the equivariant degree \eqref{equivariant-top-degree}, and so analysis of possible BPS equations relevant for electroweak skyrmions is reserved for future work.

\section{Concluding remarks}
We have described a geometric framework for gauged skyrmions on arbitrary space and target $3$-manifolds, generalising earlier work of Manton \cite{Manton1987geometry} on ordinary (ungauged) skyrmions to the setting of gauge theory. In particular, we have studied some BPS equations \eqref{BPS1} and~\eqref{BPS2} for gauged skyrmions, and classified all relevant solutions in the $\U(1)$- and $\SU(2)$-gauged case. Our results show that, unlike ordinary skyrmions which are only BPS when constant or isometries, a~rich variety of BPS solutions may be found, many of which are neither constant nor isometries.

We now comment on areas requiring further attention. Firstly, a limitation in our formulation is that it is explicitly $3$-dimensional, and so direct generalisation to other dimensions cannot be made which allow for BPS solutions. If one wanted to change the dimension of the target manifold (for example, to describe gauged hopfions), this would automatically change the degree of the forms $\Sigma$ and $\mu$, and then our BPS equations \eqref{BPS1} and~\eqref{BPS2} would not make sense. Therefore, any analogous study in these directions would require some modifications to what we have described.

A second limitation of our formulation is that we have restricted attention to a specific equivariant cohomology class, namely the class $[V_N+\mu]\in H^3_G(N,\R)$ associated with the volume form of $N$. Other classes could be interesting to study. One concrete example is electroweak skyrmions, associated with the left action of $\SU(2)$ on itself. As was noted above, in this case $H^3_{\SU(2)}(\SU(2),\R)=0$ but skyrmions on $\R^3$ can still be topologically non-trivial \cite{criadoKhozeSpannowsky2021emergence,dHokerFarhi1984decoupling}. Another example is $\SU(2)$ monopoles, for which $\SU(2)$ acts adjointly on $N=\su(2)$. In this case the topological degree is given by the integral of $\tr\big(F\wedge \d^A\phi\big)$, which is the pullback of the equivariant form $X\mapsto \tr(X\d x)$. This form represents a trivial class in the equivariant cohomology of $\su(2)\cong\R^3$, but it defines a nontrivial class in the relative equivariant cohomology of a ball $B^3\subset\R^3$ with boundary $S^2$.

Although we have focused attention on topological energy bounds, another important use of cohomology in gauge theory is in the construction of topological lagrangians. Just as the topological charge density $\tr(F\wedge F)$ of four-dimensional Yang--Mills theory can be used to define a three-dimensional Chern--Simons lagrangian, so too can other $n$-dimensional topological charge densities be used to define $(n-1)$-dimensional Lagrangians. The paper \cite{tchrakian2022SkyChernSimons} seeks to classify lagrangians for gauged sigma-models that arise in this way. Equivariant cohomology, as reviewed in this article, could be used to systematically classify such lagrangians.

Finally, this article has focused on generalising to gauge theory the ordinary Skyrme energy~\eqref{Skyrme-energy} consisting of only two terms -- the Dirichlet term $|\d\phi|^2$, and Skyrme term $|\phi^\ast\Sigma|^2$ -- which are quadratic and quartic in derivatives, and are constantly coupled. Over the years, all sorts of different Skyrme energies have been proposed which include additional terms which do not necessarily adhere to this constraint, for example by adding higher or lower order terms, or including field-dependent couplings (see, e.g., \cite{AdamOlesWereszczynski2020dielectric,AdamSanchez-GuillenWereszczynski2010sexticmodel,GudnasonNitta2015baryonic,harland2014topological}). Of particular interest in physically-realistic models is the inclusion of a potential term which gives the pion mass \cite{adkinsNappi1984pionmass}, and recently some work has been done in understanding gauged skyrmions with massive pions \cite{LivramentoRaduShnir2023solitons}. It would be interesting to consider all of these generalised Skyrme models within a geometric framework in a similar vein to what we have described here.
\subsection*{Acknowledgements}
The authors would like to thank Nuno Rom\~ao for useful discussions during early stages of this work.

\pdfbookmark[1]{References}{ref}
\LastPageEnding

\end{document}